\documentclass[a4paper,11pt,reqno,dvips]{amsart}
\usepackage{amsmath,a4,amsfonts,amssymb,amsthm,enumerate,fleqn,geometry}
\usepackage{graphicx}
\usepackage[dvips]{epsfig}
\mathindent4em

\mathsurround1pt

\def\RR{\mathbb{R}}
\def\NN{\mathbb{N}}

\pretolerance=5000
\hyphenpenalty=5000

\theoremstyle{plain}

\newtheorem{theorem}{\bf Theorem}[section]

\theoremstyle{remark}
\newtheorem{definition}[theorem]{\bf Definition}
\newtheorem{observation}[theorem]{\bf Observation}
\newtheorem{example}[theorem]{\bf Example}
\newtheorem{remark}[theorem]{\bf Remark}

\newtheorem{corollary}[theorem]{\bf Corollary}

\headsep=2.25em
\title[Computability of Frames In Computable Hilbert Spaces]{Computability of Frames In Computable Hilbert Spaces}

\author[Poonam Mantry]{Poonam Mantry}
\address{Daulat Ram College}
\email{poonam.mantry@gmail.com}

\author[S.K. Kaushik]{S.K. Kaushik}
\address{Kirorimal College}
\email{shikk2003@yahoo.co.in}

\begin{document}
\subjclass[2000]{03F60, 46S30}
\keywords{Computable frame, Computable Hilbert space}

\begin{abstract}
Frames play an important role in various practical problems related to signal and image processing. In this paper, we define computable frames in computable Hilbert spaces and obtain computable versions of some of their characterizations. Also, the notion of duality of frames in the context of computability has been studied. Finally, a necessary and sufficient condition for the existence of a computable dual frame is obtained.
\end{abstract}

\thispagestyle{empty}
\maketitle

\thispagestyle{empty}

\section{Introduction}
Frames are generalizations of orthonormal basis in Hilbert spaces. The notion of frames was introduced in 1952 by Duffin and Schaeffer \cite{7}. Basis in a Hilbert space $H$ allows every $f\in H$ to be represented as a unique expansion in terms of the basis elements. However, the condition of uniqueness of expansion is very restrictive in nature. Frames are particularly important because they provide the desired flexibility. The redundant nature of frames is a desirable property in many practical problems.\\
Frames are characterized by the associated synthesis and analysis operator. Frames allow every element in a Hilbert space to have a representation as an infinite linear combination of the frame elements. This representation is known as frame decomposition.\\
In this paper, we extend the notion of computability to frames in Hilbert spaces. We prove the computable versions of the characterizations of frames in the framework of computable analysis. Similar to Fourier representation of a computable Hilbert space $H$ given by Brattka \cite{3}, we define Frame representation for $H$ and give conditions under which it coincides with the Fourier representation of $H$. Finally, the notion of computable dual frame is defined and a necessary and sufficient condition for its existence is proved.

\section{Preliminaries}
In this section, we briefly summarize some notions from computable analysis as presented in \cite{15}. Computable Analysis is the Turing machine based approach to computability in analysis. Pioneering work in this field has been done by Turing \cite{14}, Grzegorczyk \cite{8}, Lacombe\cite{11}, Banach and Mazur\cite{1}, Pour-El and Richards\cite{13}, Kreitz and Weihrauch\cite{10} and many others. The basic idea of the representation based approach to computable analysis is to represent infinite objects like real numbers, functions or sets by infinite strings over some alphabet $\Sigma$ (which at least contains the symbols 0 and 1). Thus, a representation of a set X is a surjective function $\delta:\subseteq \Sigma^{\omega}\rightarrow X$ where $\Sigma^{\omega}$ denotes the set of infinite sequences over $\Sigma$ and the inclusion symbol indicates that the mapping might be partial. Here, $(X,\delta)$ is called a represented space. \\
A function $F:\subseteq\Sigma^\omega \to \Sigma^\omega$ is said to be computable if there exists some Turing machine, which computes infinitely long and transforms each sequence $p$, written on the input tape, into the corresponding sequence $F(p)$, written on one way output tape.  Between two represented spaces, we define the notion of a computable function.
\begin{definition}\cite{5}
Let $(X,\delta)$, $(Y,\delta^\prime)$ be represented spaces. A function $f:\subseteq X\to Y$ is called $(\delta,\delta^\prime)$-computable if there exists a computable function $F:\subseteq\Sigma^\omega \to \Sigma^\omega$ such that $\delta^\prime F(p)= f\delta(p)$, for all $p\in dom(f\delta)$.
\end{definition}
We simply call, a function $f$ computable, if the represented spaces are clear from the context. For comparing two representations $\delta$, $\delta^{'}$of a set $X$, we have the notion of reducibility of representations. $\delta$ is called reducible to $\delta^{'}$, $\delta \leq \delta^{'}$ (in symbols), if there exists a computable function $F:\subseteq \Sigma^{\omega}\rightarrow \Sigma^{\omega}$ such that $\delta(p)= \delta^{'}F(p)$, for all $p\in dom(\delta)$. This is equivalent to the fact that the identity $I:X\rightarrow X$ is $(\delta, \delta^{'})$- computable. If $\delta \leq \delta^{'}$ and $\delta^{'} \leq \delta$, then $\delta$ and $\delta^{'}$ are called computably equivalent.
Analogous to the notion of computability, we can define the notion of $(\delta,\delta^{'})$-continuity, by substituting a continuous function $F$ in the Definition 2.1. On $\Sigma^{\omega}$, we use the Cantor topology, which is simply the product topology of the discrete topology on $\Sigma$.
\\Given a represented space $(X,\delta)$, a computable sequence is defined as a computable function $f:\NN\to X $ where we assume that $\NN $ is represented by $\delta_{\NN}(1^{n}0^{\omega})= n$ and a point $x\in X$ is called computable if there is a constant computable sequence with value $x$. The notion of $(\delta,\delta^{'})$-continuity agrees with the ordinary topological notion of continuity, as long as, we are dealing with admissible representations.
\\ A representation $\delta$ of a topological space $X$ is called admissible if $\delta$ is continuous  and if the identity $I:X\to X$ is $(\delta^\prime,\delta)$-continuous for any continuous representation $\delta^\prime$ of $X$. If $\delta$, $\delta^\prime$, are admissible representation of topological spaces $X$,$Y$, then a function $f:\subseteq X\to Y$ is $(\delta,\delta^\prime)$ continuous if and only if it is sequentially continuous \cite{2}.
\\Given two represented spaces $(X,\delta)$, $(Y,\delta^\prime)$, there is a canonical representation $[\delta\rightarrow\delta^\prime]$ of the set of $(\delta,\delta^\prime)$-continuous functions $f:X\to Y $. If $\delta$ and $\delta^\prime$ are admissible representations of sequential topological spaces $X$ and $Y$ respectively, then $[\delta\rightarrow\delta^\prime]$ is actually a representation of the set $C(X,Y)$ of continuous functions $f:X\rightarrow Y$. The function space representation can be characterized by the fact that it admits evaluation and type conversion. See \cite{15} for details.
\\ If $(X,\delta)$, $(Y,\delta^\prime)$ are admissibly represented sequential topological spaces, then, in the following, we will always assume that $C(X,Y)$  is represented by $[\delta\rightarrow\delta^\prime]$ . It follows by evaluation and type conversion that the computable points in $(C(X,Y),[\delta\rightarrow\delta^\prime])$ are just the $(\delta,\delta^\prime)$-computable functions $f:\subseteq X\to Y$ \cite{15}. For a represented space $(X,\delta)$, we assume that the set of sequences $X^{\NN}$ is represented by $\delta^{\NN} \equiv [\delta_{\NN}\rightarrow \delta]$. The computable points in $(X^{\NN},\delta^{\NN})$ are just the computable sequences in $(X,\delta)$.
\\The notion of computable metric space was introduced by Lacombe \cite{12}. However, we state the following definition given by Brattka \cite{4}.
\begin{definition}\cite{4}
 A tuple $(X,d,\alpha)$ is called a \emph{computable metric space} if
\begin{enumerate}
   \item $(X,d)$ is a metric space.
   \item $\alpha:\NN\to X$ is a sequence which is a dense in $X$.
   \item $do(\alpha\times\alpha):\NN^2\to \RR$ is a computable (double) sequence in $\RR$.

\end{enumerate}
\end{definition}
Given a computable metric space $(X,d,\alpha)$, its Cauchy Representation $\delta_{X}:\subseteq \Sigma^{\omega}\rightarrow X$ is defined as\\
 \begin{align*} \delta_{X}(01^{n_{0}+1}01^{n_{1}+1}01^{n_{2}+1}...):=  \lim_{i\rightarrow\infty}\alpha(n_{i}), \end{align*} \\for all $n_{i} \in \NN$ such that $(\alpha(n_{i}))_{i\in \NN}$ converges  and \\
\begin{align*} d(\alpha(n_{i}),\alpha(n_{j})) < 2^{-i}, \ \text{for all} \ j>i.\end{align*} \\
\\In the following, we assume that computable metric spaces are represented by their Cauchy representation. All Cauchy representations are admissible with respect to the corresponding metric topology.
\\An example of a computable metric space is $(\RR,d_{\RR},\alpha_{\RR})$ with the Euclidean metric given by $d_{\RR}(x,y)= \|x-y\|$ and a standard numbering of a dense subset $Q\subseteq \RR$ as $\alpha_{\RR}\langle i,j,k \rangle = (i-j)/(k+1)$. Here, the bijective Cantor pairing function $\langle \cdot , \cdot \rangle: \NN^{2}\rightarrow \NN$ is defined as $\langle i,j \rangle = j+(i+j)(i+j+1)/2$ and this definition can be extended inductively to finite tuples. It is known that the Cantor pairing function and the projections of its inverse are computable. In the following, we assume that $\RR$ is endowed with the Cauchy representation $\delta_{\RR}$ induced by the computable metric space given above.
\\Brattka gave the following definition of a computable normed linear space.
\begin{definition} \cite{4}
A space $(X,\|\cdot\|,e)$ is called a \emph{computable normed space} if
 \begin{enumerate}
 \item $\|\cdot\|: X\rightarrow \RR$ is a norm on $X$.
 \item The linear span of $e:\NN\rightarrow X$ is dense in $X$.
 \item $(X,d,\alpha_{e})$ with $d(x,y)= \parallel x-y\parallel$ and $\alpha_{e}\langle k,\langle n_{0},...,n_{k}\rangle \rangle= \sum_{i=0}^{k}\alpha_{\mathbb{F}}(n_{i})e_{i}$ is a computable metric space with Cauchy representation $\delta_{X}$.

\end{enumerate}
Here, $\alpha_\mathbb{F}$ is a standard numbering of $\mathbb{Q}_\mathbb{F}$ where $\mathbb{Q}_\mathbb{F}$= $\mathbb{Q}$ in case of $\mathbb{F} = \mathbb{R}$ and $\mathbb{Q}_\mathbb{F}= \mathbb{Q}[i]$ in case of $\mathbb{F}= \mathbb{C}$. 
\end{definition}
 It was observed that computable normed space is automatically a computable vector space, i.e., the linear operations are all computable. If the underlying space $(X,\|\cdot\|)$ is a Banach space then $(X,\|\cdot\|,e)$ is called a \emph{computable Banach space}. If $X$ is a computable Banach space, then a sequence $(e_i)_{i\in \mathbb{N}}$ is said to be a \emph{computable basis} if it is a Schauder Basis that is computable in $X$. 
 \\We always assume that computable normed spaces are represented by their Cauchy representations, which are admissible with respect to norm topology. Two computable Banach spaces with the same underlying set are called computably equivalent if the corresponding Cauchy representations are computably equivalent.\\
In this paper, we will discuss operators on computable Hilbert spaces. Brattka gave the following definition of computable Hilbert spaces.
\begin{definition}\cite{3}
A \emph{computable Hilbert space} $(H,\langle \cdot \rangle,e)$ is a separable Hilbert space $(H,\langle.\rangle)$ together with a fundamental sequence $e:\NN \rightarrow H$ such that the induced normed space is a computable normed space.
\end{definition}
If $H$ be a computable Hilbert space with some fixed computable orthonormal basis $(e_n)$, then the Fourier representation $\delta_{Fourier}$ of $H$ is given by
\begin{align} \delta_{Fourier}\langle p,q \rangle = x \Leftrightarrow \delta_{\mathbb{F}}^\mathbb{N}(p)= (\langle x, e_n \rangle)_{n\in \mathbb{N}} \ \text{and} \ \delta_\mathbb{R}(q)= \lVert x\rVert \end{align} 
for all $x\in H $. The Fourier representation of $H$ with respect to this basis is computably equivalent to the Cauchy representation of $H$. The standard dual space representation of the dual space $H^{'}$ is given by 
\begin{align} \delta_{H^{'}}\langle p,q \rangle = f \Leftrightarrow [\delta_H \rightarrow \delta_\mathbb{F}](p)=f\ \text{and} \ \delta_\mathbb{R}(q)= \lVert f\rVert\end{align}
for all $f\in H^{'}$. A computable isomorphism $T: H\rightarrow H$ is an isomorphism such that $T$ as well as $T^{-1}$ are computable.

A sequence $(f_{i})_{i\in \NN}$ of elements in a Hilbert space $H$ is called a \emph{frame} for $H$ if there exist constants $A, B > 0$ such that
\begin{align*} A\|f\|^{2} \leq \sum_{i\in \NN}|\langle f,f_{i}\rangle|^{2} \leq B\|f\|^{2}, \ \text{for all} \ f\in H. \end{align*}
If only the upper inequality holds in above, then $(f_{i})_{i\in \NN}$ is said to be a \emph{Bessel sequence} in $H$.
The numbers $A$ and $B$ are called a lower and upper frame bound for the frame $(f_{i})_{i\in \NN}$. The \emph{synthesis operator} $T$ given by $T((c_{i})) = \sum_{i=1}^{\infty} c_{i}f_{i}, (c_{i})_{i\in\mathbb{N}}\in l^{2}$ associated with a frame $(f_{i})$ is a bounded operator from $l^{2}$ onto $H$. The adjoint operator $T^{*} : H \rightarrow l^{2}$ given by $T^{*}(f)= (\langle f,f_{i}\rangle)_{i\in \NN}$ is called the \emph{analysis operator}. The \emph{frame operator} $S : H\rightarrow H$ given by $S(f) = TT^{*}(f)= \sum_{i\in \NN}\langle f,f_{i} \rangle f_{i}$ associated with the frame $(f_{i})$ is a bounded, invertible and positive operator mapping $H$ onto itself. The \emph{frame decomposition}, stated below, shows that every element in $H$ has a representation as a superposition of the frame elements.
\begin{align}
f=\sum_{i\in \NN}\langle f, S^{-1}f_i\rangle f_i= \sum_{i\in \NN}\langle f,f_i\rangle S^{-1}f_i
\end{align}
for all $f\in H$. Both the series converge unconditionally for all $f\in H$. Thus, it is natural to view a frame as some kind of generalized basis. A frame is said to be \emph{exact} if it ceases to be a frame when an arbitrary element is removed. A nonexact frame is said to be \emph{overcomplete}. Frames for $H$ are characterized as the families $(Ue_{k})_{k\in \mathbb{N}}$ where $(e_k)_{k\in \mathbb{N}}$ is an orthonormal basis for $H$ and $U: H\rightarrow H$ is a bounded surjective operator.
\par A sequence $(x_n)$ in a Hilbert space $H$ is said to be a \emph{Riesz Basis} if it is equivalent to some orthonormal basis for $H$. That is, if there exists an orthonormal basis $(e_n)$ for $H$ and a topological isomorphism $T: H\rightarrow H$ such that $x_n= Te_n$ for every $n\in \mathbb{N}$.
 For other concepts and results related to frames, refer to \cite{6}.

\section{Main Results}

\setcounter{equation}{0}

We begin with the following definitions of a computable frame and computable Reisz basis in a computable Hilbert space $H$.
\begin{definition}
Let $H$ be a computable Hilbert space. A \emph{computable frame} for $H$ is a computable sequence $(f_{i})\subseteq H $ for which there are constants $A$,$ B > 0 $ satisfying
\begin{align*} A\|f\|^{2} \leq \sum_{i\in \NN}|\langle f,f_{i}\rangle|^{2} \leq B\|f\|^{2} \end{align*}
for all $f\in H$.
\end{definition}
\begin{remark}Every computable Hilbert space has a computable frame. \end{remark}
\begin{definition}
\emph{Computable Riesz basis} in a computable Hilbert space is a Riesz basis which is also computable as a sequence. That is, if $(H,\|.\|,(e_{n}))$ be a computable Hilbert space with orthonormal basis $(e_{n})$, then the sequence $(x_{n}) = (Te_{n})$ is said to be a computable Riesz basis for H if and only if $T:H\rightarrow H$ is a computable isomorphism.
\end{definition}
A Reisz basis $(x_{n})$ in a Hilbert space $H$ induces another equivalent norm on $H$ defined as $\||x\|| =\|Tx\|, x \in H$. The following result shows that the two norms are computably equivalent as well.

\begin{theorem}
 If $(H,\|.\|,(e_{n}))$ be a computable Hilbert space with orthonormal basis $(e_{n})$ and $(x_{n})$ = $(Te_{n})$ be a computable Riesz basis for H. Then $(H,\||.\||,(x_{n}))$ is a computably equivalent Hilbert space where $\||x\|| =\|Tx\|, x \in H$.
\end{theorem}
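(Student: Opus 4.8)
The plan is to establish two things: first, that $(H,\||\cdot\||,(x_{n}))$ satisfies Definition 2.5, i.e. is a computable Hilbert space; and second, that its Cauchy representation, call it $\delta'$, is computably equivalent to the Cauchy representation $\delta$ of $(H,\|\cdot\|,(e_{n}))$.

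For the first part I would observe that $\||\cdot\||$ is the norm of the inner product $\langle y,z\rangle':=\langle Ty,Tz\rangle$, which is genuinely an inner product since $T$ is injective, and that, a computable isomorphism being in particular a bounded bijection with bounded inverse, the norms $\|\cdot\|$ and $\||\cdot\||$ are equivalent. Hence $(H,\||\cdot\||)$ is complete and $(x_{n})=(Te_{n})$ has dense linear span, as $\operatorname{span}(x_{n})=T(\operatorname{span}(e_{n}))$ and $T$ is a surjective homeomorphism. The only substantial point of Definition 2.4 left to check is that the double sequence $(u,v)\mapsto\||\alpha_{x}(u)-\alpha_{x}(v)\||$ is computable in $\RR$; this follows because $\alpha_{x}(u)=T(\alpha_{e}(u))$ exhibits $\alpha_{x}$ as a computable sequence in $(H,\delta)$ (the computable dense sequence $\alpha_{e}$ of the original space composed with the computable operator $T$), and $\||y\||=\|Ty\|$ exhibits $\||\cdot\||$ as a composition of computable maps on $(H,\delta)$.

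For the second part it suffices to show that the identity $I\colon H\to H$ is $(\delta,\delta')$-computable and $(\delta',\delta)$-computable. For $\delta\le\delta'$ I would argue by an effective search: from a $\delta$-name of $x$ one can compute $\||x-z\||=\|Tx-Tz\|$ for any $z$ presented by a name, since $T$ is computable on $(H,\delta)$; and since the rational $x_{i}$-combinations are $\||\cdot\||$-dense, one searches, for each $j$, for an index $a_{j}$ with $\||x-\alpha_{x}(a_{j})\||<2^{-j-1}$. The search terminates by density, and the triangle inequality shows that $(a_{j})_{j}$ is a $\delta'$-name of $x$.

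The converse $\delta'\le\delta$ will be the delicate direction and is where I expect the main obstacle. A $\delta'$-name of $x$ gives indices $a_{j}$ with $\||\alpha_{x}(a_{j})-\alpha_{x}(a_{l})\||<2^{-j}$ for $l>j$ and $\alpha_{x}(a_{j})\to x$ in $\||\cdot\||$; unwinding $\alpha_{x}(a)=T(\alpha_{e}(a))$ and $\||y\||=\|Ty\|$, this says exactly that $\gamma_{j}:=T^{2}(\alpha_{e}(a_{j}))$ is a fast $\|\cdot\|$-Cauchy sequence converging to $Tx$, with each $\gamma_{j}$ a computable point of $(H,\delta)$ obtained uniformly from the name. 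From such a sequence one reads off a $\delta$-name of $Tx$, and applying the computable operator $T^{-1}$ on $(H,\delta)$ yields a $\delta$-name of $x$. The point to be careful about is precisely this: because $(x_{n})=(Te_{n})$ need not be orthonormal for $\||\cdot\||$, a $\delta'$-name directly supplies approximations to $Tx$ (built from $T^{2}$), not to $x$, so one must route through $Tx$ and the computable inverse $T^{-1}$ instead of matching coefficients directly; note that no bound on any operator norm is required, since every quantity that arises is the norm of an explicitly computable element. Combining the two directions gives $\delta\equiv\delta'$, which with the first part finishes the proof.
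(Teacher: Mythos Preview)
Your argument is correct and takes a somewhat different route from the paper. The paper first passes through the auxiliary space $(H,\||\cdot\||,(e_{n}))$ with the \emph{same} fundamental sequence $(e_{n})$: it checks this is a computable Hilbert space, shows $\delta\le\delta'$ by using the bound $\|T\|$ (from $\|x-\sum a_{i}e_{i}\|<2^{-k}/\|T\|$ one gets $\||x-\sum a_{i}e_{i}\||<2^{-k}$), and obtains $\delta'\le\delta$ by invoking the computable Banach Inverse Mapping Theorem on the identity; only at the end does it switch the fundamental sequence to $(x_{n})$. You instead work directly with $(x_{n})$ throughout, replace the norm-bound step by an effective search (which has the merit of not presupposing a computable bound on $\|T\|$), and handle $\delta'\le\delta$ by hand, reading off a $\delta$-name of $Tx$ from the $T^{2}$-images and then applying the computable $T^{-1}$, rather than citing an inverse mapping theorem. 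Your route is more elementary and self-contained, at the cost of the $T^{2}$ bookkeeping; the paper's is shorter by virtue of the detour through $(e_{n})$ and the external citation.
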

\begin{proof}
 The norm $\||.\||$ is equivalent to the norm $\|.\|$ of $H$ by Theorem 7.13 \cite{9}. Let $\delta_{H}^{'}$ be the Cauchy representation of the Hilbert space
$(H,\||.\||,(e_{n}))$. The metric induced by the norm $\||.\||$ gives $d(\alpha_{e}(n),\alpha_{e}(m))= \|T(\alpha_{e}(n)-\alpha_{e}(m))\|$ which is $\delta_{\RR}$ computable as T is computable. Altogether, $(H,\||.\||,(e_{n}))$ forms a computable Hilbert space. Now, given $\delta_{H}$ name of $x\in H$ and precision $k \in \NN$, we can effectively find $n\in \mathbb{N}$ and coefficients $a_{0}, a_{1},...a_{n} \in \mathbb{F}$ such that $\| x-\sum_{i=0}^{n} a_{i}e_{i}\| < 2^{-k}/\|T\|$. Since \begin{align*}\||x-\sum_{i=0}^{n}a_{i}e_{i}\|| \leq \|T\| \|x-\sum_{i=0}^{n}a_{i}e_{i}\| \leq 2^{-k} \end{align*}
 the $\delta_{H}^{'}$ name of $x$ can be computed and so the identity map $I:(H,\|.\|,(e_{n}))\rightarrow (H,\||.\||,(e_{n})) $ is computable and by computable Banach's Inverse Mapping Theorem \cite{4}, $I^{-1}$ is computable as well. Since $(x_{n})$ is a computable orthonormal basis for $H$ with respect to $\||.\||$, $(H,\||.\||,(x_{n}))$ is a computably equivalent Hilbert space.
\end{proof}

The following result gives a computable version of a characterization of frames in terms of the synthesis operator.

\begin{theorem}Let $H$ be a computable Hilbert space. A sequence $(f_{i})\subseteq H $ is a  computable frame for $H$ if and only if the synthesis operator
$T : l^{2} (\NN) \rightarrow H$ given by $(c_{k}) \mapsto \sum_{k=1}^{\infty} c_{k}f_{k}$ is a computable operator from $l^{2}$ onto $H$.
\end{theorem}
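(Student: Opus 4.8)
The plan is to prove both implications by coupling the classical characterization---$(f_{i})$ is a frame for $H$ exactly when the synthesis operator $T$ is a well-defined bounded operator from $l^{2}(\NN)$ onto $H$---with the computability bookkeeping, in the spirit of the proof of Theorem 3.3.

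For ``$\Rightarrow$'', suppose $(f_{i})$ is a computable frame with bounds $A,B$, where we may take $B$ rational. That $T$ maps $l^{2}$ onto $H$ is classical: the frame operator $S=TT^{*}$ is invertible, so $f=TT^{*}S^{-1}f$ exhibits every $f\in H$ as $T$ applied to the $l^{2}$-vector $T^{*}S^{-1}f$. For computability, given a $\delta_{l^{2}}$-name of $(c_{k})$ and a precision $n\in\NN$, I would read off from the Cauchy name a finitely supported rational vector $y=(y_{k})$ with $\|(c_{k})-y\|_{l^{2}}<2^{-n}/\sqrt{B}$; the finite combination $\sum_{k}y_{k}f_{k}$ is a computable point of $H$ because $(f_{i})$ is computable and the linear operations of a computable normed space are computable, and the Bessel bound gives
\[
\Bigl\|\,T(c_{k})-\sum_{k}y_{k}f_{k}\,\Bigr\|=\bigl\|T\bigl((c_{k})-y\bigr)\bigr\|\le\sqrt{B}\,\bigl\|(c_{k})-y\bigr\|_{l^{2}}<2^{-n}.
\]
Letting $n$ range produces a $\delta_{H}$-name of $T(c_{k})$, so $T$ is $(\delta_{l^{2}},\delta_{H})$-computable.

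For ``$\Leftarrow$'', suppose $T$ is a computable operator onto $H$. The standard orthonormal basis $(e_{k})$ of $l^{2}$ is a computable sequence and $f_{i}=Te_{i}$, so $(f_{i})$ is a computable sequence in $H$. The frame inequalities are then purely classical: boundedness of $T$ gives $\sum_{i}|\langle f,f_{i}\rangle|^{2}=\|T^{*}f\|_{l^{2}}^{2}\le\|T\|^{2}\|f\|^{2}$ for all $f$, while $T$ bounded and surjective is open, so the restriction of $T$ to $(\ker T)^{\perp}$ has bounded inverse of some norm $C$; writing $g=(T|_{(\ker T)^{\perp}})^{-1}f$ we get $\|f\|^{2}=\langle Tg,f\rangle=\langle g,T^{*}f\rangle_{l^{2}}\le C\|f\|\,\|T^{*}f\|_{l^{2}}$, i.e.\ $C^{-2}\|f\|^{2}\le\sum_{i}|\langle f,f_{i}\rangle|^{2}$. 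Hence $(f_{i})$ is a computable frame with bounds $A=C^{-2}$ and $B=\|T\|^{2}$.

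The main obstacle is the effectivity in ``$\Rightarrow$'': the plain truncations $\sum_{k\le N}c_{k}f_{k}$ do converge to $T(c_{k})$, but the Cauchy representation of $H$ demands a computable modulus for this convergence, and that modulus is exactly what an a priori (rational) upper bound for $\|T\|$---furnished by the frame bound $B$---buys us; without exploiting $B$ one cannot certify how close a given partial sum is to the limit. The surjectivity used in ``$\Leftarrow$'' rests on the open mapping theorem, which is available classically; should an effective version be wanted, the computable Banach Inverse Mapping Theorem cited above applies to $T|_{(\ker T)^{\perp}}$, though only the existence of a positive lower bound is needed here.
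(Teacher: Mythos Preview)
Your proof is correct and follows essentially the same approach as the paper: both directions hinge on the identity $T(\delta_{k})=f_{k}$, so that computability of the frame sequence and computability of $T$ transfer to one another, with the classical frame characterization handling the analytic content. The paper compresses the forward computability step into a one-line appeal to the standard fact that a bounded linear operator is computable once its values on a computable fundamental sequence are (a rational bound on $\|T\|$ can be hard-coded non-uniformly), whereas you unpack this explicitly using the Bessel bound $\sqrt{B}$; similarly, the paper simply cites the classical equivalence ``$T$ well-defined onto $\Leftrightarrow$ frame'' where you rederive both frame inequalities---so your argument is more self-contained but not materially different.
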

\begin{proof}
 As $(f_{i})$ is a frame for $H$, $T$ is a well defined bounded linear operator from $l^{2}$ onto $H$. Let $(\delta_{k})$ be the standard computable orthonormal basis of $l^{2}$. Then, $(T(\delta_{k})) = (f_{k})$ is a computable sequence in $H$. Therefore, $T$ is a computable operator from $l^{2}$ onto $H$. \\
Conversely, since $T$ is a well defined mapping of $l^{2}$ onto $H$, $(f_{i})$ is a frame for $H$. As a computable operator maps computable sequences to computable sequences, $ (f_{k})= (T(\delta_{k}))$ is a computable sequence in $H$. Thus, $(f_{i})$ is a computable frame for $H$.
\end{proof}

 Next, we obtain computable version of another characterization of frames in the following result.

\begin{theorem} Let  $(H,\|\cdot\|,(e_{i}))$ be a computable Hilbert space, $(e_{i})$ being the computable orthonormal basis. Computable frames for $H$ are the families $(U(e_{k}))$, where $U: H\rightarrow H$ is a computable surjective operator.
\end{theorem}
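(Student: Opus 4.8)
The plan is to obtain both implications by combining three ingredients already available: the classical characterization recalled in Section~2 (the frames for $H$ are exactly the families $(Ue_k)$ with $U\colon H\to H$ bounded and surjective), the previous Theorem~3.5 on the synthesis operator, and the computable equivalence between the Fourier representation $\delta_{Fourier}$ and the Cauchy representation $\delta_H$ for a computable Hilbert space equipped with a computable orthonormal basis. No new hard analysis should be needed; the work is in assembling these facts correctly in the effective setting.

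For the direction ``$(Ue_k)$ is a computable frame'', suppose $U\colon H\to H$ is a computable surjective operator. Since computable maps between admissibly represented spaces are continuous and $U$ is linear, $U$ is bounded, so by the classical characterization $(Ue_k)$ is a frame for $H$. Moreover $(e_k)$ is a computable sequence and a computable operator sends computable sequences to computable sequences, so $(Ue_k)$ is a computable sequence in $H$; hence it is a computable frame.

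For the converse, let $(f_i)$ be a computable frame for $H$. By Theorem~3.5 the synthesis operator $T\colon \ell^2(\NN)\to H$, $(c_k)\mapsto\sum_k c_k f_k$, is a computable operator from $\ell^2$ onto $H$. Consider the coordinate map $\Phi\colon H\to\ell^2$, $x\mapsto(\langle x,e_n\rangle)_{n\in\NN}$. Because $(e_n)$ is a computable orthonormal basis, $\delta_{Fourier}$ is computably equivalent to $\delta_H$, and since $\|\Phi(x)\|_{\ell^2}=\|x\|$ one reads off that $\Phi$ is a computable isometric isomorphism of $H$ onto $\ell^2$ (equivalently, this is the computable identification of the Cauchy and Fourier representations of $\ell^2$). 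I would then set $U:=T\circ\Phi\colon H\to H$: as a composition of computable maps $U$ is computable, it is surjective because $T$ is onto and $\Phi$ is bijective, and $Ue_k=T(\Phi(e_k))=T(\delta_k)=f_k$, where $(\delta_k)$ is the standard orthonormal basis of $\ell^2$. Thus $(f_i)=(Ue_k)$ with $U$ a computable surjective operator, which finishes the proof.

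The only point requiring genuine care is the computability of $\Phi$, i.e.\ that from a Cauchy name of $x\in H$ one can effectively produce a Cauchy name of $(\langle x,e_n\rangle)_{n}$ in $\ell^2$; this amounts to locating, for each precision $2^{-k}$, an index $N$ with $\sum_{n>N}|\langle x,e_n\rangle|^2$ sufficiently small, which is exactly the estimate underlying Brattka's equivalence of the Fourier and Cauchy representations and may therefore simply be invoked rather than reproved. Everything else reduces to the classical characterization, Theorem~3.5, and closure of computability under composition.
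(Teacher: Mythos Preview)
Your proof is correct and follows essentially the same route as the paper: both construct $U$ as the composition of the synthesis operator $T$ (computable by Theorem~3.5) with the computable coordinate isomorphism $\phi\colon H\to\ell^2$ sending $e_k\mapsto\delta_k$, and both observe that a computable surjective $U$ makes $(Ue_k)$ a computable frame. The only cosmetic difference is that for the latter direction the paper re-derives the frame inequalities for $(Ue_k)$ directly, whereas you invoke the classical characterization recalled in Section~2.
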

\begin{proof}
 Let $(f_{k})$ be a computable frame for $H$. Consider the isometrical isomorphism
$\phi : H \rightarrow l^{2}$ given by $\phi(e_{k})= \delta_{k}$, where $(\delta_{k})$ is the standard computable orthonormal basis of $l^{2}$. Clearly, $\phi$ is a computable map. Write $U= T\phi$, $T$ being the synthesis operator. Then, $U$ is a computable surjective operator such that $U(e_{k})= T(\delta_{k})= f_{k}$.
\\Conversely, let $U$ be any computable surjective operator. Then, $(U(e_{k}))$ is computable sequence in $H$ such that $\sum_{k\in \NN}|\langle f,Ue_{k}\rangle|^{2} \leq \|U\|^{2} \|f\|^{2}$, for all $f\in H$. Since $U$ is surjective, there exists a constant $C > 0$ such that $ C^{2} \|f\|^{2} \leq \sum_{k\in \NN}|\langle f,Ue_{k}\rangle|^{2}$, for all $f\in H$. Thus, $(U(e_{k}))$ is a computable frame for $H$.\end{proof} 

Now, we consider the analysis operator associated to a frame $(f_{i})$ in a Hilbert space $H$. Let $T: l^{2}\rightarrow H$ be the synthesis operator. Consider the adjoint operator of $T$ namely, $T^{*} : H \rightarrow l^{2}$ given by $T^{*}(f)= (\langle f,f_{i}\rangle)$, for all $f\in H$. It is proved in \cite{9} that a sequence $(f_{i})$ is a frame for $H$ if and only if the analysis operator $T^{*}$ maps $H$ bijectively onto a closed subspace of $l^{2}$. In the following example, we show that for a computable frame, the associated analysis operator need not be computable.

\begin{example}
Let $(a_{i})$ be a computable sequence of positive reals such that $\|(a_{i})\|_{l^{2}}$ exists but is not computable. We assume $a_{0}= 1$ and $\|(a_{i})\|_{l^{2}}^2 < 2$. Using this sequence, define a linear bounded operator $U: l^{2}\rightarrow l^{2}$ as\\

\[\left(\begin{array}{ccccc}
 1& a_{1}& a_{2} & a_{3}& \cdots\\
 0& 1 & 0 & 0 & \cdots\\
 0 & 0 & 1 & 0 & \cdots\\
 \vdots &\vdots &\vdots & \ddots\end{array}\right).\]

 $U$ is a computable operator and since $U$ is surjective, $(f_{i})= (U(\delta_{i}))$ is a computable frame for $l^{2}$. However, the analysis operator $T^{*}$ is not computable as $T^{*}(\delta_{0})= (\langle \delta_{0},U\delta_{i} \rangle )=(a_{i})$, which is not computable in $l^{2}$.

\end{example}

\begin{observation}
One may observe that in the above example, $U^{*}$ given by\\

    \[\left(\begin{array}{ccccc}
    1& 0 & 0 &0& \cdots\\
    a_{1}& 1 & 0 & 0 & \cdots\\
    a_{2} & 0 & 1 & 0 & \cdots\\
    \vdots &\vdots &\vdots & \ddots\end{array}\right)\]

is not computable.\end{observation}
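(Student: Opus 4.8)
The plan is to exhibit a single computable point of $l^{2}$ whose image under $U^{*}$ is precisely the element that the above example already certifies to be non-computable, and then to invoke the standard fact, already exploited above, that a computable operator maps computable sequences, hence computable points, to computable points. First I would read off the action of $U^{*}$ on the standard orthonormal basis $(\delta_{i})$ of $l^{2}$ from the columns of the displayed matrix: one gets $U^{*}\delta_{i}=\delta_{i}$ for every $i\geq 1$, while the zeroth column yields $U^{*}\delta_{0}=(1,a_{1},a_{2},a_{3},\dots)=(a_{i})_{i\in\NN}$, using the normalisation $a_{0}=1$. Equivalently, since the synthesis operator of the frame $(U\delta_{i})$ is $U$ itself, $U^{*}$ coincides with the analysis operator $T^{*}$ of that frame, and $T^{*}\delta_{0}=(\langle\delta_{0},U\delta_{i}\rangle)=(a_{i})$ as already computed there. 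In either description $U^{*}$ is a well-defined bounded operator, being the Hilbert-space adjoint of the bounded operator $U$, so asking whether it is computable is legitimate.

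Next I would combine two facts. On the one hand, $\delta_{0}$ is a computable point of $l^{2}$, being a member of the standard computable orthonormal basis. On the other hand, $(a_{i})_{i\in\NN}$ is not a computable point of $l^{2}$: this is exactly the conclusion of the above example, resting on the fact that $\|(a_{i})\|_{l^{2}}$ is not a computable real while the norm of any computable point of a computable normed space is computable. Since a computable operator sends computable points to computable points, the hypothesis that $U^{*}$ is computable would force $U^{*}\delta_{0}=(a_{i})$ to be a computable point of $l^{2}$, a contradiction. Hence $U^{*}$ is not computable.

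I do not anticipate a genuine obstacle: this is essentially a one-line consequence of the same principle that was used in the example for the analysis operator. The only step deserving a moment's care is verifying that the matrix of $U^{*}$ is the transpose of the matrix of $U$, so that $U^{*}\delta_{0}$ is literally the coefficient sequence $(a_{i})$ appearing along the first row of $U$; once that identification is in place, the non-computability of $U^{*}$ follows at once from the example.
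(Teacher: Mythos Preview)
Your argument is correct and is precisely the intended justification. In the paper this statement is recorded as a bare observation with no proof supplied; the point is that for the frame $(f_i)=(U\delta_i)$ one has $T=U$ and hence $T^{*}=U^{*}$, so the non-computability of $U^{*}$ is literally the conclusion already drawn in Example~3.7 for $T^{*}$, via $U^{*}\delta_{0}=(a_i)$. Your write-up makes this explicit and adds nothing extraneous.
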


In the following result, we give a sufficient condition for the computability of the analysis operator.

\begin{theorem}
Let $(H,\|\cdot\|,(e_{i}))$ be a computable Hilbert space, where $(e_{i})$ is the computable orthonormal basis. Let $(f_{n})= (U(e_{n}))$ be a computable frame for $H$, where $U$ is a computable surjective operator. If $U^{*}$, the adjoint operator of $U$, is computable, then the analysis operator $T^{*} : H \rightarrow l^{2}$ given by $T^{*}(f)= (\langle f,f_{i} \rangle )_{i\in \NN}$, for all $f\in H$ is $(\delta_{H},\delta_{l^{2}})$ computable.
\end{theorem}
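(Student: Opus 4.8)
The plan is to exhibit the analysis operator as a composition of two maps that are (or can easily be shown to be) computable. First I would rewrite $T^{*}$ in terms of the adjoint $U^{*}$. For every $f\in H$ and every $i\in\NN$,
\[
\langle f,f_{i}\rangle=\langle f,Ue_{i}\rangle=\langle U^{*}f,e_{i}\rangle ,
\]
so $T^{*}(f)=(\langle U^{*}f,e_{i}\rangle)_{i\in\NN}$. Hence $T^{*}$ factors as $T^{*}=\phi\circ U^{*}$, where $\phi:H\rightarrow l^{2}$ is the isometric isomorphism determined by $\phi(e_{k})=\delta_{k}$, equivalently $\phi(x)=(\langle x,e_{k}\rangle)_{k\in\NN}$, with $(\delta_{k})$ the standard computable orthonormal basis of $l^{2}$. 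Note that $T^{*}$ really does take values in $l^{2}$, since $(f_{i})$ is a frame and therefore Bessel, so $\sum_{i}|\langle f,f_{i}\rangle|^{2}\le B\|f\|^{2}<\infty$.

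Next I would argue that $\phi$ is $(\delta_{H},\delta_{l^{2}})$-computable. This is essentially the computable equivalence, recorded in Section 2, between the Cauchy representation $\delta_{H}$ of $H$ and the Fourier representation of $H$ with respect to the fixed computable orthonormal basis $(e_{i})$: from a $\delta_{H}$-name of $x$ one effectively obtains a $\delta_{\mathbb{F}}^{\NN}$-name of the coefficient sequence $(\langle x,e_{k}\rangle)_{k}$, and since $\|(\langle x,e_{k}\rangle)_{k}\|_{l^{2}}=\|x\|$ is likewise computable from a name of $x$, one recovers a Fourier name and hence a Cauchy name of $\phi(x)$ in $l^{2}$. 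This is precisely the map called $\phi$ in the proof of the preceding theorem, where its computability was already used.

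Finally, since the composition of computable functions between represented spaces is computable (compose the realizing machines), and since $U^{*}$ is $(\delta_{H},\delta_{H})$-computable by hypothesis while $\phi$ is $(\delta_{H},\delta_{l^{2}})$-computable, the composite $T^{*}=\phi\circ U^{*}$ is $(\delta_{H},\delta_{l^{2}})$-computable, as claimed.

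As for the main obstacle: there is essentially none of substance here — the only point requiring attention is the computability of $\phi$, and even that is a standard fact already invoked earlier. The real role of the hypothesis is highlighted by the preceding example and observation: without the assumption that $U^{*}$ is computable, $T^{*}$ can fail to be computable even for a computable frame $(Ue_{i})$, so the content of the theorem is exactly that computability of $U^{*}$ alone suffices.
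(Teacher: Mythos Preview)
Your proof is correct and is essentially the same as the paper's: both hinge on the identity $\langle f,f_{i}\rangle=\langle U^{*}f,e_{i}\rangle$ and use computability of $U^{*}$ to obtain the $l^{2}$-norm $\|(\langle f,f_{i}\rangle)_{i}\|_{l^{2}}=\|U^{*}f\|$. The only cosmetic difference is that you package this as the composition $T^{*}=\phi\circ U^{*}$, whereas the paper computes the coefficient sequence directly from $f$ and the computable $(f_{i})$ and invokes $U^{*}$ only for the norm.
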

\begin{proof}
	 Let $f\in H$ be represented with respect to the Cauchy representation $\delta_{H}$. Since the inner product on $H$ is computable and $(f_{i})$ is a computable sequence in $H$, $(\langle f,f_{i} \rangle )_{i\in \NN}$ is a computable sequence in $\mathbb{F}$. Also, we have
\begin{align*}\sum_{i\in \NN}|\langle f,f_{i}\rangle|^{2} = \sum_{i\in \NN}|\langle f,Ue_{i}\rangle|^{2} = \|U^{*}f\|^{2} , f\in H.\end{align*}
Now, using hypothesis, the evaluation property and the computability of $\|\cdot\|$, $\|U^{*}f\|^{2}$ is $\delta_{\RR}$ computable. Therefore, a Cauchy name of the sequence $(\langle f,f_{i} \rangle ) \in l^{2}$ can be computed. Hence, $T^{*}$ is computable.
\end{proof}

\begin{corollary}
Let $(H,\|\cdot\|,(e_{i}))$ be a computable Hilbert space, $(e_{i})$ being the computable orthonormal basis. Let $(f_{n})= (U(e_{n}))$ be a computable frame for $H$, where $U$ is a computable surjective operator. If $U^{*}$ is computable, then the frame operator $S = TT^{*}: H \rightarrow H$ given by $Sf = \sum_{i\in \NN}\langle f,f_{i} \rangle f_{i}$, $f\in H$ is a computable isomorphism.
\end{corollary}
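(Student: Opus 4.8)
The plan is to exhibit $S$ as a composition of two operators already known to be computable, and then to recover computability of its inverse from a computable open mapping principle. First, since $(f_i)$ is a computable frame for $H$, the characterisation of computable frames via the synthesis operator (Theorem 3.5) gives that $T\colon l^{2}\to H$, $(c_k)\mapsto\sum_k c_k f_k$, is a computable operator. Secondly, because $U^{*}$ is assumed computable, Theorem 3.9 yields that the analysis operator $T^{*}\colon H\to l^{2}$, $T^{*}(f)=(\langle f,f_i\rangle)_{i\in\NN}$, is $(\delta_H,\delta_{l^{2}})$-computable. Since the composition of computable functions is computable, $S=TT^{*}\colon H\to H$, $Sf=\sum_{i\in\NN}\langle f,f_i\rangle f_i$, is a computable operator.

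It then remains to show that $S^{-1}$ is computable as well, and here I would invoke classical frame theory together with the computable inverse mapping theorem. Classically, the frame operator of a frame is a bounded, positive, invertible operator of $H$ onto itself; in particular $S$ is a bijective bounded linear map of the computable Hilbert space $H$ onto itself. Having already established that $S$ is computable, the computable Banach Inverse Mapping Theorem \cite{4} (the same tool used in the proof of Theorem 3.4) applies and shows that $S^{-1}\colon H\to H$ is computable. Hence $S$ is a computable isomorphism.

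I expect the only point requiring genuine care to be the verification that the hypotheses of the computable inverse mapping theorem are actually met — namely that $S$ is a computable, linear, bounded bijection between computable Banach spaces — but each of these is in hand: linearity and boundedness are classical properties of the frame operator, bijectivity is its classical invertibility, and computability is the composition argument above. As a side remark, computability of $S^{-1}$ together with the frame decomposition $f=\sum_{i\in\NN}\langle f,S^{-1}f_i\rangle f_i$ makes the canonical dual family $(S^{-1}f_i)$ a computable sequence, which is the natural entry point to the subsequent treatment of computable dual frames; one could alternatively try to build $S^{-1}$ explicitly from $T$ and $T^{*}$, but the inverse mapping route avoids any such construction.
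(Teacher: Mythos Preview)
Your argument is correct and matches the paper's own proof essentially line for line: the paper also obtains computability of $S$ from that of $T$ (via Theorem~3.5) and $T^{*}$ (via Theorem~3.9), notes that $S$ is classically an isomorphism because $(f_k)$ is a frame, and then invokes the Computable Banach Inverse Mapping Theorem to conclude that $S^{-1}$ is computable. Your write-up is more explicit about the hypotheses needed for the inverse mapping step, but the strategy is identical.
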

\begin{proof}
 Since $(f_{k})$ is a frame, $S$ is an isomorphism and computability of $S$ follows from the computability of $T$ and $T^{*}$. By Computable Banach Inverse Mapping Theorem \cite{4}, the operator $S^{-1}$ is also computable.
\end{proof}

\begin{corollary}
Let $(H,\|\cdot\|,(e_{i}))$ be a computable Hilbert space, $(e_{i})$ being the computable orthonormal basis. Let $(f_{n})= (U(e_{n}))$ be a computable frame for $H$, where $U$ is a computable surjective operator. If $U^{*}$ is computable, then the operator $T^{+} : H \rightarrow l^{2}$ given by $T^{+}(f)= (\langle f,S^{-1}f_{k} \rangle)_{k \in \NN}$, for all $f\in H$ is a computable operator.
\end{corollary}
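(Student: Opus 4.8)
The plan is to exhibit $T^{+}$ as a composition of two maps that are already known to be computable. First I would record that the frame operator $S$ is a positive, self-adjoint, invertible operator, so that $S^{-1}$ is self-adjoint as well; by the first corollary above (which uses the hypothesis that $U^{*}$ is computable), $S^{-1}:H\to H$ is a computable isomorphism. Using self-adjointness of $S^{-1}$, for every $f\in H$ and every $k\in\NN$ we have
\begin{align*}
\langle f, S^{-1}f_{k}\rangle = \langle S^{-1}f, f_{k}\rangle ,
\end{align*}
so that $T^{+}(f) = (\langle S^{-1}f, f_{k}\rangle)_{k\in\NN} = T^{*}(S^{-1}f)$, i.e.\ $T^{+} = T^{*}\circ S^{-1}$, where $T^{*}$ is the analysis operator of the frame $(f_{i})$.

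Next I would invoke the preceding theorem on the analysis operator: since $(f_{n})=(U(e_{n}))$ with $U$ computable and surjective and $U^{*}$ computable, the analysis operator $T^{*}:H\to l^{2}$, $g\mapsto(\langle g, f_{i}\rangle)_{i\in\NN}$, is $(\delta_{H},\delta_{l^{2}})$-computable. Since computable functions between represented spaces are closed under composition (one simply composes the associated Turing machines on $\Sigma^{\omega}$), the map $T^{+} = T^{*}\circ S^{-1}:H\to l^{2}$ is computable, which is the claim. I would note in passing that the norm component of the Cauchy $\delta_{l^{2}}$-name of $T^{+}(f)$ requires no separate effort, since it is already contained in the output produced by $T^{*}$.

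As an alternative route (avoiding the composition), one can observe that the canonical dual system $(S^{-1}f_{k}) = ((S^{-1}U)(e_{k}))$ is itself a computable frame of the form $(V(e_{k}))$ with $V = S^{-1}U$ computable and surjective, and with $V^{*} = U^{*}S^{-1}$ computable (again by self-adjointness of $S^{-1}$). Then $T^{+}$ is precisely the analysis operator of the frame $(V(e_{k}))$, and the preceding theorem applies verbatim to give the computability of $T^{+}$.

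I do not expect a genuine obstacle here: the only step that is not pure bookkeeping is the identity $\langle f, S^{-1}f_{k}\rangle = \langle S^{-1}f, f_{k}\rangle$ coming from the self-adjointness of $S^{-1}$, which is what allows the reduction of $T^{+}$ to the analysis operator $T^{*}$ already handled in the previous theorem; after that the result follows from the computability of $S^{-1}$ and $T^{*}$ together with closure of computability under composition.
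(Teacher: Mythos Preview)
Your proposal is correct and follows essentially the same approach as the paper: the paper also computes $S^{-1}f$ via Corollary~3.10, then applies the computable analysis operator $T^{*}$ to obtain $T^{*}(S^{-1}f)=(\langle S^{-1}f,f_{k}\rangle)=(\langle f,S^{-1}f_{k}\rangle)$, using self-adjointness of $S^{-1}$ exactly as you do. Your alternative route through the dual frame $(V(e_{k}))$ with $V=S^{-1}U$ is a nice additional observation not in the paper, but the primary argument coincides.
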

\begin{proof}
 Let $f\in H$ be represented with respect to the Cauchy representation. Then by Corollary 3.10, $\delta_H$ name of $S^{-1}f$ can be computed. As $T^{*}$ is a computable operator, $\delta_{l^{2}}$ name of $T^{*}(S^{-1}f)= (\langle S^{-1}f,f_{k}\rangle )= (\langle f,S^{-1}f_{k}\rangle)$ can also be computed.
\end{proof}

The operator $T^{+}$ is called the \emph{pseudo-inverse} of the synthesis operator $T$. The numbers $\langle f,S^{-1}f_{k}\rangle $ are called the \emph{frame coefficients}. In \cite{3}, Brattka defined the Fourier representation $\delta_{Fourier}$ 
of a computable Hilbert space $H$ using the Fourier coefficients corresponding to a fixed orthonormal basis. This motivates the following representation of $H$ using the frame coefficients.

\begin{definition}
Let $H$ be a computable Hilbert space with a fixed computable frame $(f_{i})$. Define the frame representation of $H$ as follows:
\begin{align*} \delta_{Frame}(\langle p,q \rangle)= f  \Leftrightarrow  \delta_{\mathbb{F}}^{\NN}(p)= (\langle f, S^{-1}f_{k} \rangle)_{k\in \mathbb{N}} \ \ and \ \ \delta_{\RR}(q)= \sum_{k=1}^{\infty}|\langle f,S^{-1}f_{k} \rangle|^{2}, \end{align*}
\end{definition}

where $\delta_{\mathbb{F}}^{\NN}$ denotes the canonical representation induced by $\delta_{\mathbb{F}}$. That is, one can read the frame representation such that any point $f\in H$ is considered as $(\langle f, S^{-1}f_{k} \rangle ) \in l^{2}$.\\

We now analyze the relationship between the above defined frame representation $\delta_{Frame}$ and the Cauchy representation   $\delta_H$ of a computable Hilbert Space $H$. We observe that $\delta_{Frame} \leq \delta_{H}$. However, $\delta_{H}\leq \delta_{Frame}$ under some additional assumptions.

\begin{theorem}
Let $(H,\|\cdot\|,(e_{i}))$ be a computable Hilbert space with a computable frame $(f_{i})= (Ue_{i})$, $(e_{i})$ being the computable orthonormal basis. The Frame representation of $H$ with respect to $(f_{i})$ is reducible to the Cauchy representation of $H$. The converse statement holds, if $U^{*}$ is computable.
\end{theorem}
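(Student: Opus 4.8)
The plan is to read a $\delta_{Frame}$-name of $f$ as, essentially, a Fourier name of the $l^{2}$-vector of frame coefficients $T^{+}f=(\langle f,S^{-1}f_{k}\rangle)_{k\in\NN}$ with respect to the standard orthonormal basis $(\delta_{k})$ of $l^{2}$, and then to run Brattka's computable equivalence of the Fourier and Cauchy representations of $l^{2}$ in both directions. Concretely, by the defining relation in Definition 3.12 a pair $\langle p,q\rangle$ is a $\delta_{Frame}$-name of $f$ precisely when $p$ is a $\delta_{\mathbb{F}}^{\NN}$-name of $(\langle f,S^{-1}f_{k}\rangle)_{k}$ and $q$ is a $\delta_{\RR}$-name of $\sum_{k}|\langle f,S^{-1}f_{k}\rangle|^{2}=\|T^{+}f\|_{l^{2}}^{2}$; since $t\mapsto\sqrt{t}$ and $t\mapsto t^{2}$ are computable on the nonnegative reals, this is, up to a computable change of the second component, the same data as a Fourier name of $T^{+}f\in l^{2}$. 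With this dictionary the first assertion will follow from computability of the synthesis operator $T$ (Theorem 3.5) together with the frame decomposition, and the converse from computability of $T^{+}$ (Corollary 3.11), which is the only place the hypothesis on $U^{*}$ enters.

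For $\delta_{Frame}\le\delta_{H}$ I would take an arbitrary $\delta_{Frame}$-name $\langle p,q\rangle$ of some $f\in H$. Computing a $\delta_{\RR}$-name $q'$ of $\sqrt{\delta_{\RR}(q)}$, the pair $\langle p,q'\rangle$ is a Fourier name of the vector $c:=(\langle f,S^{-1}f_{k}\rangle)_{k}\in l^{2}$ with respect to $(\delta_{k})$; by the computable equivalence of the Fourier and Cauchy representations of $l^{2}$ we obtain from it a $\delta_{l^{2}}$-name of $c$. Feeding this into the computable synthesis operator $T:l^{2}\to H$ of Theorem 3.5 yields a $\delta_{H}$-name of $Tc=\sum_{k}\langle f,S^{-1}f_{k}\rangle f_{k}$, which equals $f$ by the frame decomposition. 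The composite of these three computable steps is a computable $F:\subseteq\Sigma^{\omega}\to\Sigma^{\omega}$ with $\delta_{H}F=\delta_{Frame}$ on $\mathrm{dom}(\delta_{Frame})$, so $\delta_{Frame}\le\delta_{H}$.

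For the converse, assume $U^{*}$ is computable and start from a $\delta_{H}$-name of $f$. By Corollary 3.11 the pseudo-inverse $T^{+}:H\to l^{2}$, $T^{+}f=(\langle f,S^{-1}f_{k}\rangle)_{k}$, is computable, so from the given name we compute a $\delta_{l^{2}}$-name of $T^{+}f$. Applying the Fourier--Cauchy equivalence for $l^{2}$ in the other direction turns this into a Fourier name $\langle p,q'\rangle$ of $T^{+}f$; thus $p$ is a $\delta_{\mathbb{F}}^{\NN}$-name of $(\langle T^{+}f,\delta_{k}\rangle)_{k}=(\langle f,S^{-1}f_{k}\rangle)_{k}$ and $q'$ is a $\delta_{\RR}$-name of $\|T^{+}f\|_{l^{2}}$. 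Squaring, we get a $\delta_{\RR}$-name $q$ of $\|T^{+}f\|_{l^{2}}^{2}=\sum_{k}|\langle f,S^{-1}f_{k}\rangle|^{2}$, and then $\langle p,q\rangle$ is, by Definition 3.12, a $\delta_{Frame}$-name of $f$. The composite is again computable, giving $\delta_{H}\le\delta_{Frame}$.

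I do not expect a serious obstacle: the argument is bookkeeping layered on top of the operator-computability results already proved. The one point that deserves attention is that in the first direction the coefficient stream $p$ by itself is \emph{not} enough to recover a Cauchy name of $c$ in $l^{2}$ --- there is no computable bound on the $l^{2}$-tails from a finite prefix of coordinates --- and it is precisely the norm datum $q$, pushed through the Fourier--Cauchy equivalence, that supplies the missing rate of convergence. In the converse the entire weight rests on Corollary 3.11, hence on the computability of $U^{*}$, and this hypothesis is not automatic (recall Example 3.7).
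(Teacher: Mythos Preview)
Your proposal is correct and follows essentially the same route as the paper: recognize a $\delta_{Frame}$-name of $f$ as (up to computable squaring of the norm component) an $l^{2}$-name of the coefficient vector $T^{+}f$, then apply the computable synthesis operator $T$ of Theorem~3.5 together with the frame decomposition for $\delta_{Frame}\le\delta_{H}$, and invoke Corollary~3.11 on $T^{+}$ for the converse under the hypothesis that $U^{*}$ is computable. The only cosmetic difference is that you spell out the passage through the Fourier representation of $l^{2}$ and the squaring/unsquaring of the norm datum explicitly, whereas the paper passes directly to a $\delta_{l^{2}}$-name; your remark that the coordinate stream $p$ alone does not suffice without the norm information $q$ is a worthwhile clarification.
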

\begin{proof}
 Let $f\in H$ and $p,q \in \Sigma^{\omega}$ be such that $\delta_{Frame}(\langle p,q \rangle)= f$. Then,
\begin{align*} \delta_{\mathbb{F}}^{\NN}(p)= (\langle f, S^{-1}f_{k}\rangle ) \ \ and \ \ \delta_{\RR}(q)= \sum_{k=1}^{\infty}|\langle f,S^{-1}f_{k}\rangle|^{2}. \end{align*}
This gives $\delta_{l^{2}}(\langle p,q \rangle)= (\langle f, S^{-1}f_{k}\rangle)$. Using Theorem 3.5, $\delta_{H}$ name of $\sum_{k=1}^{\infty} \langle f,S^{-1}f_{k}\rangle f_{k}$ can be computed. Since $(f_{i})$ is a frame for $H$, by Frame decomposition, $f= \sum_{k=1}^{\infty}\langle f,S^{-1}f_{k} \rangle f_{k}$ for all $f\in H$. Hence, $\delta_{H}$ name of given $f$ can be computed.\\
For the converse part, given $\delta_{H}$ name of $f\in H$, we can compute $\delta_{l^{2}}$ name of $T^{+}f$ by Corollary 3.11. That is, we can get $\delta_{\mathbb{F}}^{\NN}$ name of $(\langle f, S^{-1}f_{k} \rangle)$ and $\delta_{\RR}$ name of $\sum_{k=1}^{\infty}|\langle f,S^{-1}f_{k} \rangle|^{2}$ and thus the $\delta_{Frame}$ name of $f$.\end{proof} 

In view of Example 3.7, given a computable frame, the analysis operator associated to the frame need not be computable and so in Theorem 3.9, we obtain a sufficient condition under which it becomes computable. We now consider the converse question, that is, given a frame, if the associated analysis operator is computable, then is the frame computable?
The following is a counterexample to this.

\begin{example}
Let $(a_{i})$ be a computable sequence of positive real numbers such that $\|(a_{i})\|_{l_{2}}$ exists but is not computable. We assume $a_{0}= 1$ and $\|(a_{i})\|_{l^{2}}^2 < 2$. Define a linear bounded operator $U: l^{2}\rightarrow l^{2}$ as\\

\[\left(\begin{array}{ccccc}
 1& 0 & 0 & 0 & \cdots\\
 a_{1}& 1 & 0 & 0 & \cdots\\
 a_{2} & 0 & 1 & 0 & \cdots\\
 \vdots &\vdots &\vdots & \ddots\end{array}\right).\]

 Since $U$ is a surjective operator, $(U\delta_{n})= (f_{n})$ forms a frame for $l^{2}$. The operator $U$ is not computable as $U\delta_{0}= (a_{i})$ is not computable in $l^{2}$. Therefore, $(f_{i})$ is a frame but not a computable frame. However, the analysis operator $T^{*} : H \rightarrow l^{2}$ given by $T^{*}(f)= (\langle f,f_{i} \rangle)$, $f\in H$ is the computable operator $U^{*}$. 
\end{example}

Note that in Example 3.14, $\|f_{0}\|$ is not computable. The following result gives a sufficient condition under which computability of analysis operator implies the computability of the frame.

\begin{theorem}
Let $(H,\|\cdot\|,(e_{i}))$ be a computable Hilbert space with $(e_{i})$ as the computable orthonormal basis. If the analysis operator for a frame $(f_{i})$, with computable sequence of norms $(\|f_{i}\|)$, is computable, then the frame is computable.
\end{theorem}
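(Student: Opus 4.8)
The plan is to reconstruct each frame element $f_i$ from two pieces of data that the hypotheses make available: its Fourier coefficients with respect to the computable orthonormal basis $(e_n)$, and its norm. Once both are in hand, I would invoke the computable equivalence of the Fourier representation $\delta_{Fourier}$ and the Cauchy representation $\delta_H$ of $H$ established by Brattka \cite{3}. The role of the extra hypothesis on the norms $(\|f_i\|)$ is precisely that the Fourier representation packages a point as a pair (coefficient sequence, norm); Example 3.14 shows that the coefficient sequence alone does not determine the point effectively.

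First I would note that, since $(e_n)$ is a computable sequence in $H$ and the analysis operator $T^{*}\colon H\to l^{2}$ is computable, a computable operator maps computable sequences to computable sequences, so $n\mapsto T^{*}(e_n)=(\langle e_n,f_i\rangle)_{i\in\NN}$ is a computable sequence in $l^{2}$. Because the coordinate projections $l^{2}\to\mathbb{F}$ are computable, the double sequence $(n,i)\mapsto\langle e_n,f_i\rangle$ is then a computable double sequence in $\mathbb{F}$, and applying complex conjugation (a computable operation on $\mathbb{F}$) shows that $(n,i)\mapsto\langle f_i,e_n\rangle$ is computable. In particular, for each fixed $i$ the Fourier coefficient sequence $(\langle f_i,e_n\rangle)_{n\in\NN}$ is a computable element of $\mathbb{F}^{\NN}$, uniformly in $i$.

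Next, by hypothesis $(\|f_i\|)$ is a computable sequence of reals, so a $\delta_{\RR}$-name of $\|f_i\|$ can be produced uniformly in $i$. Combining this with the previous step, for each $i$ one can effectively produce $p,q\in\Sigma^{\omega}$ with $\delta_{\mathbb{F}}^{\NN}(p)=(\langle f_i,e_n\rangle)_{n\in\NN}$ and $\delta_{\RR}(q)=\|f_i\|$, that is, a $\delta_{Fourier}$-name $\langle p,q\rangle$ of $f_i$, uniformly in $i$. Hence $(f_i)$ is a computable sequence in $(H,\delta_{Fourier})$. Finally, since the Fourier representation of $H$ with respect to $(e_n)$ is computably equivalent to $\delta_H$ \cite{3}, applying the reduction $\delta_{Fourier}\le\delta_H$ converts these $\delta_{Fourier}$-names into $\delta_H$-names, so $(f_i)$ is a computable sequence in $H$; being a frame, it is a computable frame.

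The main thing to be careful about is the bookkeeping of uniformity in $i$ throughout: that "computable sequence in $l^{2}$" genuinely yields a computable double sequence of coordinates, that the passage through conjugation and the pairing $\langle p,q\rangle$ stays uniform, and that the equivalence $\delta_{Fourier}\equiv\delta_H$ is witnessed by fixed computable functions on $\Sigma^{\omega}$ and so can be applied uniformly to a whole sequence of names rather than to a single point. No single step is technically hard; the content is in organizing these uniformities correctly and in recognizing that the norm hypothesis supplies exactly the missing ingredient of the Fourier representation.
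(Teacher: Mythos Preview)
Your proof is correct and follows essentially the same route as the paper's: use the computability of $T^{*}$ on the computable sequence $(e_n)$ to obtain the double sequence $(n,i)\mapsto\langle e_n,f_i\rangle$, swap the roles of the indices (the paper invokes type conversion here), adjoin the computable norms to produce $\delta_{Fourier}$-names of the $f_i$ uniformly in $i$, and then pass to $\delta_H$ via the equivalence of representations. Your explicit conjugation step to match the exact form $(\langle f_i,e_n\rangle)_n$ required by $\delta_{Fourier}$ is actually a small refinement over the paper, which glosses over this point.
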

\begin{proof}
 Since the map $T^{*} : H \rightarrow l^{2}$ given by $T^{*}(f)= (\langle f,f_{i} \rangle)$, $f\in H$ is $(\delta_{H},\delta_{l^{2}})$ computable, $(T^{*}(e_{n}))$ is a computable sequence in $l^{2}$. So, there exists a computable map $g : \NN \times \NN \rightarrow \mathbb{F}$ given by $g(n,i) = \langle e_{n},f_{i}\rangle$. By Type conversion, the map $h : \NN  \rightarrow \mathbb{F}^{\NN}$ given by $h(i) = (\langle e_{n},f_{i}\rangle )_{n}$, $i\in \NN$ is $[\delta_{\NN},[\delta_{\NN} \rightarrow \delta_{\mathbb{F}}]]$ computable. Since $(f_{i})$ is a frame with a computable sequence of norms, given $i\in \NN$, we get the $\delta_{Fourier}$ name of $f_{i}$ and hence, $\delta_{H}$ name of $f_{i}$. Thus, $(f_{i})$ is a computable frame.\end{proof}

Corollary 3.11 shows that given a computable frame $(f_{i}) = (U(e_{i}))$ with $U^{*}$ computable, $((\langle f_n, S^{-1}f_{k}\rangle)_k)_n$ is a computable sequence in $l^{2}$. Conversely, we have the following result.

\begin{theorem}
Let $H$ be a computable Hilbert space and $(f_{i})$ be a frame for $H$. If $((\langle f_{n}, S^{-1}f_{k}\rangle)_{k})_n$ is a computable sequence in $\mathbb{F}^{\NN}$, then the frame $(f_{i})$ is a computable frame in $H$.
\end{theorem}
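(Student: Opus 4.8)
The plan is to reconstruct each $f_n$ as a computable point of $H$, in the spirit of the proof of Theorem 3.16, by combining its ``coordinates'' with a computable norm. Write $g_k:=S^{-1}f_k$ for the canonical dual frame and $M_{nk}:=\langle f_n,S^{-1}f_k\rangle=\langle f_n,g_k\rangle$. Since $n\mapsto(M_{nk})_k$ is a computable sequence in $\mathbb{F}^{\NN}$, type conversion produces a computable map $g:\NN\times\NN\to\mathbb{F}$ with $g(n,k)=M_{nk}$; note that $M$ is Hermitian (because $S^{-1}$ is self-adjoint, $\langle f_n,S^{-1}f_k\rangle=\langle S^{-1}f_n,f_k\rangle=\langle g_n,f_k\rangle$) and that $M_{nn}\ge 0$ is computable. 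The frame decomposition gives the relation $f_n=\sum_k M_{nk}f_k$ for every $n$.

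The second step is to unwind the operator-theoretic content of $M$. Let $T\colon l^{2}\to H$ be the synthesis operator of $(f_i)$, $T_d=S^{-1}T$ that of the dual frame, and $P=T^{+}T=T_d^{*}T\colon l^{2}\to l^{2}$ the orthogonal projection onto the (closed) range of $T^{*}$. Then $P\delta_n=T_d^{*}f_n=(M_{nk})_k$, so the matrix of $P$ is (the conjugate transpose of) $M$; in particular $\|P\delta_n\|^{2}=\langle P\delta_n,\delta_n\rangle=M_{nn}$ is computable, whence $(P\delta_n)_n$ is a computable sequence in $l^{2}$, and, since $\|P\|\le 1$, a truncation argument shows $P$ is a computable operator on $l^{2}$. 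From $TT^{+}=I_H$ and $T^{+}T=P$ we get $T^{+}f_n=P\delta_n$ and $f_n=T(P\delta_n)$; equivalently $T$ restricts to a topological isomorphism of the range of $P$ onto $H$ with inverse $T^{+}$. So the task reduces to the computability of this restricted isomorphism, equivalently of the synthesis operator $T$, or of the pair consisting of the dual frame $(g_k)$ and the frame operator $S$.

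\textbf{The main obstacle.} This reduction is where the substance lies, and I expect it to be the hard part. The relation $f_n=\sum_k M_{nk}f_k$ expresses each $f_n$ only through the other frame vectors, so by itself it cannot yield a $\delta_H$-name; moreover $M$ is unchanged if $(f_i)$ is replaced by $(Wf_i)$ for a unitary $W$, so one must involve the fixed computable structure of $H$ (its computable orthonormal basis $(e_j)$) in an essential way. The route I would attempt is: (i) show the canonical dual frame $(g_k)=(S^{-1}f_k)$ is a computable sequence in $H$; (ii) deduce, via Theorem 3.4, that its synthesis operator is computable, and hence, via the Computable Banach Inverse Mapping Theorem, that $S$ — the inverse of the frame operator of $(g_k)$ — is a computable isomorphism of $H$; (iii) conclude that $f_n=Sg_n$ is a computable point of $H$, uniformly in $n$, so that $(f_i)$ is a computable frame. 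Step (i) is the crux: one must extract the Fourier coefficients $(\langle g_k,e_j\rangle)_j$ and the norms $\|g_k\|$ effectively from the data, and — as the examples preceding the theorem together with Theorem 3.16 suggest — carrying this out seems to call for some extra effectivity beyond the bare matrix $M$ (for instance computability of the frame bounds, or of the norms $\|f_i\|$). Identifying exactly what is needed here, and checking that the chain (i)--(iii) introduces no circularity, is the delicate heart of the argument.
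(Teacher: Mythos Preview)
Your first two paragraphs recover exactly what the paper does, and in one respect more cleanly: the paper computes $\sum_k|\langle f_n,S^{-1}f_k\rangle|^2$ by quoting an identity from Heil (Theorem~8.22 in \cite{9}) which, after simplification, collapses to $M_{nn}$; your projection argument $\|P\delta_n\|^2=\langle P\delta_n,\delta_n\rangle=M_{nn}$ yields the same conclusion directly. With that in hand, the paper declares that one now has the $\delta_{Frame}$-name of $f_n$ and hence, via Theorem~3.13, its $\delta_H$-name.

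The obstacle you isolate is real, and it is fatal. The reduction $\delta_{Frame}\le\delta_H$ in Theorem~3.13 works by applying the synthesis operator $T$ to the coefficient sequence, and $T$ is known to be computable (Theorem~3.5) only under the standing hypothesis there that $(f_i)$ is already a computable frame; invoking it here is circular. Your unitary-invariance remark in fact promotes the gap to a counterexample: in $H=l^2$, rotate the first two standard basis vectors through a non-computable angle $\theta\in(0,\pi/2)$ to obtain an orthonormal basis $(f_i)$. Then $S=I$ and $M_{nk}=\langle f_n,f_k\rangle=\delta_{nk}$ is trivially a computable sequence in $\mathbb{F}^{\NN}$, yet $f_0=(\cos\theta)\delta_0+(\sin\theta)\delta_1$ is not a computable point of $l^2$. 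So the statement is false as written, the paper's final step is exactly the circularity you anticipated, and your programme (i)--(iii) cannot be completed without an additional hypothesis tying $(f_i)$ to the fixed computable structure of $H$.
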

\begin{proof}
 By hypothesis, the map $G: \NN \rightarrow F^{\NN}$ given by $n \mapsto (\langle f_{n}, S^{-1}f_{k}\rangle)_{k}$, $n\in \NN$ is computable. By Theorem 8.22 in \cite{9}, we know that
   \begin{align*} \sum_{k=1,k\neq n}^{\infty}| \langle f_{k}, S^{-1}f_{n} \rangle|^{2} = \frac{1- |\langle S^{-1}f_{n}, f_{n} \rangle|^{2}- |1-\langle f_{n},S^{-1}f_{n}\rangle |^{2}}{2}. \end{align*}\\
This gives
  \begin{align*} \sum_{k=1}^{\infty}|\langle f_{n}, S^{-1}f_{k}\rangle |^{2} = \frac{1- |\langle S^{-1}f_{n}, f_{n}\rangle|^{2}- |1-\langle f_{n},S^{-1}f_{n}\rangle|^{2}}{2} + |\langle f_{n}, S^{-1}f_{n}\rangle |^{2}. \end{align*}
Now, given $\delta_\mathbb{N}$ name of $n\in \NN$, we can compute $\langle f_{n}, S^{-1}f_{n}\rangle $ and hence, $\delta_{\RR}$ name of $\sum_{k=1}^{\infty}|\langle f_{n}, S^{-1}f_{k}\rangle |^{2}$ can be computed. Thus, the map $G: \NN \rightarrow l^{2}$ given by $n \mapsto (\langle f_{n}, S^{-1}f_{k}\rangle)_{k} $, $n\in \NN$ is computable. Therefore, given $\delta_\mathbb{N}$ name of $n\in \NN$, we get $\delta_{Frame}$ name of $f_{n}$ and thus,  $\delta_{H}$ name of $f_{n}$. Hence $(f_{n})$ is a computable frame.\end{proof} 

\par Given a fixed frame $(f_i)$ in $H$, via frame decomposition, any other sequence $(\phi_l)$ in $H$ can be expressed as $\phi_l=\sum_{k=1}^{\infty} u_{l k} f_k$, where $u_{l k}=\langle\phi_l, S^{-1} f_k\rangle$. If $(\phi_l)$ is a frame, the operator defined by $\{u_{l k}\}_{l,k\in \mathbb{N}}$, that is,  $U_{\phi}:l^{2}\rightarrow l^{2}$ given by
\[\left(\begin{array}{ccccc}
\langle\phi_{1},S^{-1}f_{1}\rangle & \langle\phi_{1},S^{-1}f_{2}\rangle & \cdots\\
\langle\phi_{2},S^{-1}f_{1}\rangle & \langle\phi_{2},S^{-1}f_{2}\rangle & \cdots\\
\vdots &\vdots &\vdots & \end{array}\right)\]
 is a bounded linear operator on $l^{2}(\NN)$ by Proposition 5.5.6 [7].\\
 Conversely, recall that if $(f_k)$ is a frame and $\{u_{l k}\}_{l,k\in\mathbb{N}}$ be any bounded linear operator on $l^2(\mathbb{N})$, then the sequence $(\phi_l)$, where $\phi_l=\sum_{k=1}^{\infty} u_{l k} f_k$ forms a frame for $H$ if and only if there exists a constant $C>0$ such that
 \begin{align*}
 \sum_{l=1}^{\infty} |\langle \phi_l, f \rangle|^2\geq C \sum_{k=1}^{\infty}|\langle f_k, f\rangle|^2, \ \text{for all} \ f\in H.
 \end{align*}
  \\ We can prove the following computable version of the above.
  
\begin{theorem}
	Let $H$ be a computable Hilbert space and $(f_i)$ be a computable frame for $H$ with computable analysis operator. Then for any frame $(\phi_l)$ in $H$ with computable analysis operator, the operator $U_{\phi}:l^{2}\rightarrow l^{2}$ given by
	\[\left(\begin{array}{ccccc}
	\langle\phi_{1},S^{-1}f_{1}\rangle & \langle\phi_{1},S^{-1}f_{2}\rangle & \cdots\\
	\langle\phi_{2},S^{-1}f_{1}\rangle & \langle\phi_{2},S^{-1}f_{2}\rangle & \cdots\\
	\vdots &\vdots &\vdots & \end{array}\right)\] is a computable operator on $l^2$.
\end{theorem}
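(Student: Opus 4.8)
The plan is to write $U_{\phi}$ as a composition of operators that are already known, or easily shown, to be computable, and then conclude. Introduce notation: let $T:l^{2}\to H$ be the synthesis operator of $(f_{i})$ and $T^{*}:H\to l^{2}$, $T^{*}(f)=(\langle f,f_{i}\rangle)_{i}$, its analysis operator (computable by hypothesis), so that $S=TT^{*}$ is the frame operator; let $T_{\phi}^{*}:H\to l^{2}$, $T_{\phi}^{*}(g)=(\langle g,\phi_{l}\rangle)_{l}$, be the analysis operator of $(\phi_{l})$ (computable by hypothesis); and let $C:l^{2}\to l^{2}$ be coordinatewise conjugation $(c_{k})_{k}\mapsto(\overline{c_{k}})_{k}$, which is the identity when $\mathbb{F}=\RR$ and in general is computable, since conjugating the $\mathbb{Q}_{\mathbb{F}}$-coefficients of a Cauchy name of $x\in l^{2}$ yields a Cauchy name of $\overline{x}$.

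First I would collect the pieces. Since $(f_{i})$ is a computable frame, $T$ is computable by Theorem 3.4; hence $S=TT^{*}$ is computable as a composition of computable operators, and since $S$ is a bounded bijection of $H$ (it is a frame operator), $S^{-1}$ is computable by the Computable Banach Inverse Mapping Theorem \cite{4}. The heart of the argument is then the operator identity
\[
U_{\phi}=C\,T_{\phi}^{*}\,S^{-1}\,T\,C .
\]
I would verify this by evaluating the right-hand side on the standard basis vectors $\delta_{k}$ of $l^{2}$: $C\delta_{k}=\delta_{k}$, $T\delta_{k}=f_{k}$, then $S^{-1}f_{k}\in H$, then $T_{\phi}^{*}(S^{-1}f_{k})=(\langle S^{-1}f_{k},\phi_{l}\rangle)_{l}$, and a final application of $C$ produces $(\langle\phi_{l},S^{-1}f_{k}\rangle)_{l}$, which is precisely the $k$-th column $(u_{lk})_{l}$ of $U_{\phi}$. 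As both sides are bounded linear operators on $l^{2}$ agreeing on the total set $\{\delta_{k}:k\in\NN\}$, they coincide. Granting the identity, $U_{\phi}$ is a composition of computable operators, hence computable, which is the assertion of the theorem.

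The step I expect to require the most care is the verification of this matrix identity, in particular the conjugation bookkeeping forced by the fact that the entries of $U_{\phi}$ are $\langle\phi_{l},S^{-1}f_{k}\rangle$ rather than $\langle S^{-1}f_{k},\phi_{l}\rangle$; over $\RR$ the issue disappears and one simply has $U_{\phi}=T_{\phi}^{*}S^{-1}T$. If one prefers to argue in the bare-hands style of the earlier proofs, an equivalent route is available: show directly that the columns $(U_{\phi}\delta_{k})_{k}=\bigl((\langle\phi_{l},S^{-1}f_{k}\rangle)_{l}\bigr)_{k}$ form a computable sequence in $l^{2}$ — they are the coordinatewise conjugates of $(T_{\phi}^{*}(S^{-1}f_{k}))_{k}$, which is computable since $(S^{-1}f_{k})_{k}$ is a computable sequence in $H$ and $T_{\phi}^{*}$ is computable — and then invoke the boundedness of $U_{\phi}$ (recalled just before the theorem) together with the principle, already used in Theorem 3.4, that a bounded linear operator carrying the standard generating sequence to a computable sequence is itself computable.
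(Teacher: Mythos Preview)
Your proposal is correct and follows essentially the paper's approach: the paper also observes that $(S^{-1}f_k)_k$ is a computable sequence in $H$, applies the computable analysis operator of $(\phi_l)$ to obtain the columns $(U_\phi\delta_k)_k$ as a computable sequence in $l^2$, and then concludes computability of $U_\phi$ from its boundedness---exactly the ``bare-hands'' alternative you sketch at the end. Your treatment is in fact a bit more careful than the paper's, which silently identifies $(\langle S^{-1}f_k,\phi_l\rangle)_l$ with $(\langle\phi_l,S^{-1}f_k\rangle)_l$ without making the conjugation step explicit.
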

\begin{proof}
 The map $U_\phi$ is a bounded linear operator on $l^2$ by Proposition 5.5.6[7]. The sequence $(S^{-1}f_k)_k$ is a computable sequence in $H$ and since the analysis operator corresponding to the frame $(\phi_l)$ is computable, the sequence $((\langle\phi_l, S^{-1} f_k\rangle)_l)_k$ = $(U_\phi(\delta_k))_k$ is a computable sequence in $l^2$. Altogether, this implies that $U_\phi$ is a computable operator on $l^2$.
\end{proof}
 In the following, we prove a more uniform version of the above result. 
\begin{theorem}
 Let $(f_{k})$ be a computable frame for $H$ with computable analysis operator where $H$ is a computable Hilbert space. Then the map 
 \begin{align}
 G: \subseteq  \ &C(H,l^2) \times \mathbb{R} \rightarrow C(l^{2},l^{2})\\
 &(T^{*},s)\mapsto U_\phi \end{align}
 with dom($G$)= $\{(T^{*},s)|\ T^{*} \ \text{is the analysis operator corresponding to a frame}\  (\phi_i) \ \text{in} \ H \\ \text{and} \ s > 0 \    \text{is such that}\  \lVert U_\phi \rVert \leq s\}$ is computable.
 \end{theorem}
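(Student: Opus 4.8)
The plan is to recover the columns $U_{\phi}(\delta_{k})$ of $U_{\phi}$ from the name of the input analysis operator, observe that they form a computable sequence in $l^{2}$ uniformly in that name, and then reconstruct $U_{\phi}$ as a point of $C(l^{2},l^{2})$ from its columns together with the supplied bound $s$, finishing by type conversion.

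I would first fix the data attached to the frame $(f_{k})$: since $(f_{k})$ is a computable frame, its synthesis operator $T_{f}\colon l^{2}\to H$ is computable by Theorem 3.5; by hypothesis its analysis operator $T_{f}^{*}$ is computable; hence the frame operator $S=T_{f}T_{f}^{*}\colon H\to H$ is computable, and, being an isomorphism, has computable inverse $S^{-1}$ by the Computable Banach Inverse Mapping Theorem \cite{4}. Thus $(S^{-1}f_{k})_{k\in\NN}$ is a computable sequence in $H$, available independently of the input. Now given $(T^{*},s)\in\mathrm{dom}(G)$, where $T^{*}$ is the analysis operator of a frame $(\phi_{l})$ in $H$ (presented by a $[\delta_{H}\to\delta_{l^{2}}]$-name) and $s\ge\lVert U_{\phi}\rVert$, I would use the evaluation property of the function space representation to compute the sequence $\bigl(T^{*}(S^{-1}f_{k})\bigr)_{k\in\NN}$ in $l^{2}$, uniformly in $k$ and in the name of $T^{*}$. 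Its $l$-th coordinate is $\langle S^{-1}f_{k},\phi_{l}\rangle=\overline{\langle\phi_{l},S^{-1}f_{k}\rangle}$, so applying coordinatewise complex conjugation (a computable self-map of $l^{2}$) produces the computable sequence $\bigl(U_{\phi}(\delta_{k})\bigr)_{k\in\NN}=\bigl((\langle\phi_{l},S^{-1}f_{k}\rangle)_{l}\bigr)_{k\in\NN}$ of columns of $U_{\phi}$, still uniformly in the name of $T^{*}$.

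To finish, I would build a $[\delta_{l^{2}}\to\delta_{l^{2}}]$-name of $U_{\phi}$ from these columns and the bound $s$ by the usual linearity-plus-boundedness argument. Given a $\delta_{l^{2}}$-name of $c\in l^{2}$ and a precision $2^{-m}$, I would read a finitely supported rational approximant $a$ far enough along the Cauchy name of $c$ to get $M\in\NN$ with $\mathrm{supp}(a)\subseteq\{0,\dots,M\}$ and $\lVert c-a\rVert<2^{-m}/(2(1+s))$; since the coordinate projection $P_{M}$ onto $\{0,\dots,M\}$ fixes $a$ and has norm at most $1$, this gives $\lVert c-P_{M}c\rVert\le\lVert c-a\rVert$. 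Then $\lVert U_{\phi}c-\sum_{k=0}^{M}c_{k}U_{\phi}(\delta_{k})\rVert=\lVert U_{\phi}(c-P_{M}c)\rVert\le s\lVert c-P_{M}c\rVert<2^{-m}/2$, and the finite sum $\sum_{k=0}^{M}c_{k}U_{\phi}(\delta_{k})$, a linear combination of the already computed vectors $U_{\phi}(\delta_{k})$ with coefficients $c_{k}$ computable from the name of $c$, can be evaluated to within $2^{-m}/2$; this yields $U_{\phi}c$ to within $2^{-m}$, hence a $\delta_{l^{2}}$-name of $U_{\phi}c$. Since this procedure is uniform in $c$, in $m$ and in $(T^{*},s)$, type conversion then gives the computability of $G$.

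I expect the last step to be the real obstacle, or at least the one demanding care: a bounded operator on $l^{2}$ is not recoverable as a point of $C(l^{2},l^{2})$ from its columns alone, and it is precisely the availability of $s$ that makes the tail estimate $\lVert U_{\phi}(c-P_{M}c)\rVert\le s\lVert c-P_{M}c\rVert$ effective --- the role that, in the non-uniform Theorem 3.18, is played by the mere existence of a norm bound, but which in the present uniform statement must be supplied as input (compare the role of the computable norm bound in Example 3.7). The remaining ingredients --- computability of $S^{-1}$, the evaluation step recovering the columns, and the bookkeeping for type conversion --- should be routine.
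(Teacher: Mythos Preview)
Your proposal is correct and follows essentially the same route as the paper: recover the columns $U_{\phi}(\delta_{k})=(\langle\phi_{l},S^{-1}f_{k}\rangle)_{l}$ from the input analysis operator applied to the computable sequence $(S^{-1}f_{k})$, then use the supplied bound $s$ together with linearity to approximate $U_{\phi}x$ by finite linear combinations of these columns, and finish by type conversion. Your write-up is in fact more careful than the paper's --- you spell out the computability of $S^{-1}$, the coordinatewise conjugation needed to pass from $T^{*}(S^{-1}f_{k})$ to $U_{\phi}(\delta_{k})$ over $\mathbb{C}$, and the $2^{-m}/2$ split in the final estimate --- but the underlying argument is the same.
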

 \begin{proof}
  Given $[\delta_H \rightarrow \delta_{l^2}]$ name of $T^{*}$, $\delta_\mathbb{R}$ name of $s$ where $T^{*}$ is the analysis operator corresponding to a  frame $(\phi_{i})$ in $H$ and $s$ is an upper bound to the norm of the corresponding operator $U_\phi$ and given some $x\in l^{2}$ and precision $m\in \NN$, we can effectively find $n \in \NN$ and numbers $q_0, q_1,...,q_n\in Q_\mathbb{F}$ such that
\begin{align*}
||x-\sum_{i=0}^n q_i \delta_i||<\frac{2^{-m}}{s}.
\end{align*}
It follows that
\begin{align*}
||U_{\phi} x-U_{\phi} (\sum_{i=0}^n q_i \delta_i)||< 2^{-m}.
\end{align*}
Since $U_\phi$ is linear, $U_\phi(\sum_{i=0}^n q_i \delta_i)=\sum_{i=0}^n q_i U_\phi(\delta_i)=\sum_{i=0}^n q_i(\langle \phi_l, S^{-1} f_i \rangle)_l$. Using Evaluation and Type conversion, we obtain the computability of the map $G$.
\end{proof}

\par Conversely, we can have the following result.
\begin{theorem}
	Let $(f_k)$ be a computable frame for a computable Hilbert space $H$. Suppose $U=\{u_{n k}\}_{n,k\in\mathbb{N}}$ be any bounded linear operator on $l^2(\mathbb{N})$ such that $U^*$ is computable. Then the frame $(\phi_n),$ where $\phi_n=\sum_{k=1}^\infty u_{n k} f_k$, forms a computable frame in $H$.
\end{theorem}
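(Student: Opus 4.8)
The plan is to prove that $(\phi_n)_{n\in\NN}$ is a computable sequence in $H$; since $(\phi_n)$ is assumed to be a frame, Definition 3.1 then immediately yields that it is a computable frame for $H$. The idea is that $\phi_n$ is the image under the synthesis operator of $(f_k)$ of the $n$-th row of $U$, and that this row is computable, uniformly in $n$, precisely because $U^{*}$ is computable.

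First I would note that, as $(f_k)$ is a computable frame for $H$, Theorem 3.5 makes the synthesis operator $T:l^{2}(\NN)\to H$, $T((c_k))=\sum_{k=1}^{\infty}c_kf_k$, a computable operator. For each $n\in\NN$ put $v_n:=(u_{nk})_{k\in\NN}$. Computing with the adjoint, $u_{nk}=\langle U\delta_k,\delta_n\rangle=\langle\delta_k,U^{*}\delta_n\rangle=\overline{(U^{*}\delta_n)_k}$, so $v_n=\overline{U^{*}\delta_n}$, the coordinatewise complex conjugate of $U^{*}\delta_n$ (and $v_n=U^{*}\delta_n$ itself when $\mathbb{F}=\RR$). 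In particular $v_n\in l^{2}$, since $U^{*}$ is bounded and coordinatewise conjugation is an isometry of $l^{2}$, and $\phi_n=\sum_{k=1}^{\infty}u_{nk}f_k=T(v_n)$.

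Then I would combine the computability facts: the standard orthonormal basis $(\delta_n)$ is a computable sequence in $l^{2}$; since $U^{*}$ is computable, $(U^{*}\delta_n)_n$ is a computable sequence in $l^{2}$; and composing with the computable coordinatewise conjugation map $l^{2}\to l^{2}$ (the computable isometry $\sum c_k\delta_k\mapsto\sum\overline{c_k}\delta_k$, which is the identity when $\mathbb{F}=\RR$) shows that $(v_n)_n$ is a computable sequence in $l^{2}$. As computable operators carry computable sequences to computable sequences, $(\phi_n)_n=(T(v_n))_n$ is a computable sequence in $H$, and, $(\phi_n)$ being a frame, it is a computable frame for $H$. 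The only substantive point is the identification $v_n=\overline{U^{*}\delta_n}$; this is what forces the argument to go through $U^{*}$ rather than $U$, and it is essential, since (as Example 3.14 shows) $U$ itself can fail to be computable even when $U^{*}$ is. Everything else — computability of $T$, of coordinatewise conjugation on $l^{2}$, and preservation of computable sequences by computable operators — is routine.
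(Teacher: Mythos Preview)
Your proof is correct and follows essentially the same route as the paper: obtain the $n$-th row of $U$ from $U^{*}\delta_n$, then apply the (computable) synthesis operator $T$ of $(f_k)$. The only difference is that you are explicit about the complex conjugation, writing $v_n=\overline{U^{*}\delta_n}$ and invoking the computable coordinatewise conjugation on $l^{2}$, whereas the paper writes $U^{*}(e_n)=(u_{nk})_k$ directly; your formulation is the correct one over $\mathbb{C}$, and otherwise the arguments coincide.
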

\begin{proof}
Since the map $U^*$ is computable, we get the computability of the map $n\mapsto U^*(e_n)=(u_{n k})_k$. As the synthesis operator associated to a computable frame is computable, we obtain the computability of the map $n \mapsto T((u_{n k})_k)=\sum_{k=1}^{\infty} u_{n k} f_k$. Thus, the frame $(\phi_n)$ where  $\phi_n=\sum_{k=1}^{\infty} u_{n k} f_k$ forms a computable frame in $H$.
\end{proof}

 The condition of computability of the operator $U^*$ cannot be relaxed as is justified by the following example.
\begin{example}
Let $(f_k)=(\delta_k)$ be a computable frame in $l^2$ where $(\delta_k)$ is the standard orthonormal basis of $l^2$. Define a bounded linear operator $U$ on $l^2$ by
 \(\left(\begin{array}{ccccc}
    1 & 0 & 0 & 0 & \cdots\\
    0 & 1 & a_1 & a_2 & \cdots\\
    0 & 0 & 1 &0 & \cdots\\
    \vdots &\vdots &\vdots &\vdots & \vdots \end{array}\right)\),
where $(a_i)$ is a computable sequence of positive reals such that $||(a_i)||_{l^2}$ exists but is not computable. We assume that $a_{0}=1$ and $||(a_i)||_{l^2}^{2} < 2$. Then the sequence $(\phi_n)=(U^*(\delta_n))$ forms a frame which is not computable.
\end{example}

The frame decomposition \cite{6}, $f= \sum_{k=1}^{\infty}\langle f,S^{-1}f_{k} \rangle f_{k}$, for all $f\in H$ shows that every element in $H$ has a representation as an infinite linear combination of the frame elements. If $(f_{k})$ is an overcomplete frame, then there exists frames $(g_{k})\neq (S^{-1}f_{k})$ for which
  \begin{align*} f= \sum_{k=1}^{\infty}\langle f,g_{k} \rangle f_{k} \ \ \textrm{for all} \ \  f\in H.\end{align*}
A frame $(g_{k})$ satisfying the above representation is called a \emph{dual frame} of $(f_{k})$. The above condition is equivalent to
 $f= \sum_{k=1}^{\infty} \langle f,f_{k} \rangle g_{k} \ \ \textrm{for all} \ \ f\in H$.
The frame $(S^{-1}f_{k})$ is called the \emph{canonical dual frame} of $(f_{k})$.\\
We now extend this idea in the computability theory.
\begin{definition}
Let $H$ be a computable Hilbert space and $(f_{i})$ be a computable frame for $H$. A computable frame $(g_{i})$ for $H$ is called a \emph{computable dual frame} for $(f_{i})$ if it satisfies
\begin{align} f= \sum_{i=1}^{\infty}\langle f,g_{i} \rangle f_{i}, \ \ \textrm{for all} \ \  f\in H. \end{align}
\end{definition}

For the canonical dual frame, the analysis operator and the synthesis operator are computable by the following result.

\begin{theorem}
Let $H$ be a computable Hilbert space. Let $(f_{n})$ be a computable frame for $H$ with computable analysis operator $T^{*}$. Then, the analysis operator $\widetilde{T}^{*}$ and the synthesis operator $\widetilde{T}$ of the canonical dual frame $(\widetilde{f_{n}})$ = $(S^{-1}f_{n})$ are computable operators.
\end{theorem}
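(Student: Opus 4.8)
The plan is to realise every operator attached to the canonical dual frame $(\widetilde{f_{n}})=(S^{-1}f_{n})$ as a composition of operators that are already known to be computable, and then to close the argument with Theorem 3.5 and the Computable Banach Inverse Mapping Theorem \cite{4}. First I would record the computability of the building blocks. Since $(f_{n})$ is a computable frame, Theorem 3.5 gives that its synthesis operator $T:l^{2}\to H$ is a computable operator onto $H$; combined with the hypothesis that $T^{*}$ is computable, the frame operator $S=TT^{*}:H\to H$ is computable as a composition of computable operators. Because $(f_{n})$ is a frame, $S$ is a bounded linear bijection of $H$ onto itself, so the Computable Banach Inverse Mapping Theorem \cite{4} yields that $S^{-1}:H\to H$ is computable as well. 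Consequently $(\widetilde{f_{n}})=(S^{-1}f_{n})$ is the image of the computable sequence $(f_{n})$ under the computable operator $S^{-1}$, hence is itself a computable sequence in $H$; being the canonical dual frame it is in particular a frame, so $(\widetilde{f_{n}})$ is a computable frame for $H$.

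For the synthesis operator $\widetilde{T}$ of $(\widetilde{f_{n}})$ there are two equivalent routes, and I would use whichever reads most cleanly. One route is to invoke Theorem 3.5 directly: since $(\widetilde{f_{n}})$ has just been shown to be a computable frame, its synthesis operator is a computable operator from $l^{2}$ onto $H$. The other route is to note that boundedness and continuity of $S^{-1}$ give
\[
\widetilde{T}\big((c_{k})\big)=\sum_{k=1}^{\infty}c_{k}S^{-1}f_{k}=S^{-1}\Big(\sum_{k=1}^{\infty}c_{k}f_{k}\Big)=S^{-1}T\big((c_{k})\big),
\]
so that $\widetilde{T}=S^{-1}\circ T$ is a composition of computable operators and hence computable.

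For the analysis operator $\widetilde{T}^{*}$ of $(\widetilde{f_{n}})$ I would use that $S$, being a positive self-adjoint invertible operator, has a self-adjoint inverse, so that for every $f\in H$
\[
\widetilde{T}^{*}(f)=\big(\langle f,S^{-1}f_{n}\rangle\big)_{n}=\big(\langle S^{-1}f,f_{n}\rangle\big)_{n}=T^{*}\big(S^{-1}f\big),
\]
i.e. $\widetilde{T}^{*}=T^{*}\circ S^{-1}$, a composition of the computable operators $T^{*}$ (hypothesis) and $S^{-1}$ (established above), and therefore computable; this is in effect the content of Corollary 3.11 read under the present hypotheses. The argument is essentially bookkeeping: assembling Theorem 3.5, the Computable Banach Inverse Mapping Theorem, closure of the computable operators under composition, and the self-adjointness of $S^{-1}$. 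The one place that deserves care — and the closest thing to an obstacle — is justifying the computability of $S^{-1}$, which rests on $S$ being an isomorphism (a consequence of $(f_{n})$ being a frame) together with the computable inverse mapping theorem, rather than on any fresh effective construction.
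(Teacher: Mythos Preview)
Your proposal is correct and follows essentially the same route as the paper: establish computability of $S^{-1}$ from $S=TT^{*}$ via the Computable Banach Inverse Mapping Theorem, deduce that $(S^{-1}f_{n})$ is a computable frame, invoke Theorem~3.5 for $\widetilde{T}$, and use the identity $\widetilde{T}^{*}=T^{*}S^{-1}$ for the analysis operator. The paper's proof is simply a terser version of exactly this argument.
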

\begin{proof}
 The computability of $S^{-1}$ follows from the computability of $T^{*}$ and the synthesis operator $T$ and so $(\widetilde{f_{n}})$ = $(S^{-1}f_{n})$ is a computable dual frame. By Theorem 3.5, the associated synthesis operator $\widetilde{T}$ is computable. The relation $\widetilde{T}^{*}$ = $T^{*}S^{-1}$ implies the computability of the operator $\widetilde{T}^{*}$.
\end{proof}

We know that given a frame, the associated analysis operator maps $H$ bijectively onto a closed subspace of $l^{2}$. The next result shows that the orthogonal projection onto this closed subspace is a computable operator.

\begin{theorem}
Let $H$ be a computable Hilbert space. Let $(f_{n})$ be a computable frame for $H$ with computable analysis operator $T^{*}$. Then, the orthogonal projection $P$ of $l^{2}$ onto the range of $T^{*}$, $T^{*}(H)$, is a computable operator.
\end{theorem}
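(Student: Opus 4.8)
The plan is to write $P$ explicitly as a composition of operators that are already known to be computable. The key is the classical identity
\[
P = T^{*} S^{-1} T,
\]
where $T : l^{2} \to H$ is the synthesis operator of $(f_{n})$ and $S = TT^{*} : H \to H$ is the frame operator. First I would verify that the right-hand side is indeed the orthogonal projection of $l^{2}$ onto $T^{*}(H)$. Its range is contained in $T^{*}(H)$ by construction, and since $S^{-1}$ is a bijection of $H$, the range is in fact all of $T^{*}(H)$. For any $f \in H$ one has $T^{*}S^{-1}T(T^{*}f) = T^{*}S^{-1}(TT^{*})f = T^{*}S^{-1}Sf = T^{*}f$, so $T^{*}S^{-1}T$ acts as the identity on $T^{*}(H)$; hence it is idempotent. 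Finally, the frame operator $S$ is positive and self-adjoint, so $S^{-1}$ is self-adjoint as well, and therefore $(T^{*}S^{-1}T)^{*} = T^{*}S^{-1}T$. Being self-adjoint, idempotent and with range exactly $T^{*}(H)$, the operator $T^{*}S^{-1}T$ is the orthogonal projection claimed.

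It then remains to put the computability pieces together. Since $(f_{n})$ is a computable frame for $H$, Theorem 3.5 guarantees that the synthesis operator $T : l^{2} \to H$ is computable, while $T^{*}$ is computable by hypothesis. Consequently the frame operator $S = TT^{*}$ is a composition of computable operators, hence computable; as $S$ is a bounded bijection of the computable Hilbert space $H$ onto itself, its inverse $S^{-1}$ is computable by the Computable Banach Inverse Mapping Theorem \cite{4} --- exactly the reasoning already used in Corollary 3.10. Therefore $P = T^{*}S^{-1}T$ is a composition of the computable operators $T$, $S^{-1}$ and $T^{*}$, and so $P$ is a computable operator on $l^{2}$.

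I do not anticipate a serious obstacle here. Once the identity $P = T^{*}S^{-1}T$ is recognized, the argument reduces entirely to the closure of computability under composition together with facts already at our disposal. The one point that deserves a little care is the passage from the computability of $S$ to that of $S^{-1}$, which is not automatic for general operators but is available here because $S$ is a bounded bijection of $H$ onto $H$, so that the computable inverse mapping theorem applies.
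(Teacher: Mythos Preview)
Your proof is correct and is essentially the same as the paper's: the paper's explicit formula $P((c_n)) = \bigl(\langle \sum_n c_n S^{-1}f_n,\, f_k\rangle\bigr)_k$ is exactly your identity $P = T^{*}S^{-1}T$ written componentwise, and the computability argument likewise reduces to the computability of $T$, of $S^{-1}$ via the computable inverse mapping theorem, and of $T^{*}$. The only cosmetic difference is in verifying that this operator is the orthogonal projection---the paper checks it is the identity on $T^{*}(H)$ and zero on $(T^{*}(H))^{\perp}=\ker T$, while you argue via idempotence and self-adjointness.
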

\begin{proof}
 The orthogonal projection $P$ of $l^{2}$ onto $T^{*}(H)$ is given by
 \begin{align*}  P((c_{n}))= (\langle \sum_{n=1}^{\infty}c_{n}\widetilde{f_{n}}, f_{k}\rangle )_{k\in \NN},\ \  (c_{n})\in l^{2}.\end{align*}
 This can be verified by showing that $P$ is identity on $T^{*}(H)$ and zero on     $(T^{*}(H))^{\perp}$ = $Ker(T)$. Now, since $(\widetilde{f_{i}})$ = $(S^{-1}f_{i})$ is a computable dual frame, given $\delta_{l^{2}}$ name of $(c_{n})$, we can compute $\delta_{H}$ name of  $\sum_{n=1}^{\infty}c_{n}\widetilde{f_{n}}$. The computability of $T^{*}$ implies that $\delta_{l^{2}}$ name of $(\langle \sum_{n=1}^{\infty}c_{n}\widetilde{f_{n}}, f_{k}\rangle)_{k\in \NN}$ can be computed. As $T^{*}(H)$ is a computable subspace of $l^{2}$, we can get $\delta_{T^{*}(H)}$ name of $P((c_{n}))$.
 \end{proof}

Next, we prove a computable version of Theorem 3.3.2 \cite{6} related to the duality of basis in $H$.
\begin{theorem}
Let $H$ be a computable Hilbert space and $(e_{k})$ be a computable basis for $H$ such that the sequence of norms of the coordinate functionals $(\|e_{k}^{'}\|)$ is computable. Then, there exists a unique computable basis $(g_{k})$ for $H$ such that
\begin{align*} f= \sum_{k=1}^{\infty}\langle f,g_{k}\rangle e_{k}. \end{align*}
\end{theorem}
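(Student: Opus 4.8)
\noindent\emph{Proof proposal.}
The plan is to peel off the purely classical statements and reduce the theorem to a single computability assertion. By Theorem 3.3.2 of \cite{6} the sequence $(g_{k})$ with the stated expansion exists, is unique, is again a basis for $H$, is biorthogonal to $(e_{k})$, and is exactly the sequence of Riesz representatives of the coordinate functionals $(e_{k}^{'})$ of $(e_{k})$; that is, $g_{k}\in H$ is the unique vector with $e_{k}^{'}(f)=\langle f,g_{k}\rangle$ for all $f\in H$, so $\langle e_{j},g_{k}\rangle=\delta_{jk}$. Hence uniqueness, biorthogonality and the basis property of $(g_{k})$ are inherited verbatim from \cite{6}, so the only thing left to establish --- and the only place the hypothesis on $(\pp{e_{k}^{'}})$ is used --- is that $(g_{k})$ is a computable sequence in $H$.

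First I would check that $(e_{k}^{'})$ is a computable sequence in $(H^{'},\delta_{H^{'}})$. Given a $\delta_{H}$-name of $f\in H$, an index $k$ and a precision $m\in\NN$, one can effectively extract from the Cauchy name an $n\ge k$ and coefficients $a_{0},\dots,a_{n}\in\mathbb{Q}_{\mathbb{F}}$ (padding with zeros to force $n\ge k$) such that $\pp{f-\sum_{i=0}^{n}a_{i}e_{i}}<2^{-m}/\pp{e_{k}^{'}}$; this is legitimate since $\pp{e_{k}^{'}}>0$ is a computable real by hypothesis. Because $e_{k}^{'}$ is linear and bounded with $e_{k}^{'}(e_{i})=\delta_{ki}$, we have $e_{k}^{'}(\sum_{i=0}^{n}a_{i}e_{i})=a_{k}$, so $|e_{k}^{'}(f)-a_{k}|\le\pp{e_{k}^{'}}\,\pp{f-\sum_{i=0}^{n}a_{i}e_{i}}<2^{-m}$. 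Thus each $e_{k}^{'}$ is $(\delta_{H},\delta_{\mathbb{F}})$-computable uniformly in $k$, so by type conversion $k\mapsto e_{k}^{'}$ is a computable sequence in $C(H,\mathbb{F})$; together with the computable sequence of norms $(\pp{e_{k}^{'}})$ this exhibits $(e_{k}^{'})$ as a computable sequence in $(H^{'},\delta_{H^{'}})$.

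Next I would transfer from $H^{'}$ back to $H$. Since $H$ is a computable Hilbert space it carries a computable orthonormal basis $(\varepsilon_{n})$ (the one used to form $\delta_{Fourier}$), and $\delta_{Fourier}$ with respect to it is computably equivalent to $\delta_{H}$. For all $k,n$, the number $\langle g_{k},\varepsilon_{n}\rangle=\overline{e_{k}^{'}(\varepsilon_{n})}$ is computable uniformly in $(k,n)$ --- from the computable sequence $(\varepsilon_{n})$, the computable sequence $(e_{k}^{'})$, evaluation, and computability of conjugation on $\mathbb{F}$ --- and $\pp{g_{k}}=\pp{e_{k}^{'}}$ by the Riesz isometry, hence computable uniformly in $k$. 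Therefore a $\delta_{Fourier}$-name, and so a $\delta_{H}$-name, of $g_{k}$ can be computed uniformly in $k$ (equivalently, one invokes the computable Riesz representation map $H^{'}\to H$), i.e.\ $(g_{k})$ is a computable sequence in $H$. Combined with the classical facts recalled above, $(g_{k})$ is the required unique computable basis.

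The step I expect to be the main obstacle is the second paragraph: a rational approximation of $f$ in the basis $(e_{k})$ carries, a priori, no information about the genuine coefficients $e_{k}^{'}(f)$, and recovering them requires an a posteriori estimate scaled by $\pp{e_{k}^{'}}$. This is precisely why the computability of $(\pp{e_{k}^{'}})$ cannot be dropped --- in the spirit of Examples 3.14 and 3.21, where an uncomputable norm obstructs computability. Everything afterwards is routine use of evaluation, type conversion and the equivalence $\delta_{Fourier}\equiv\delta_{H}$.
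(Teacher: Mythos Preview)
Your proof is correct and follows the same three-step skeleton as the paper: show that the coordinate functionals $(e_k')$ form a computable sequence in $C(H,\mathbb{F})$, upgrade this to computability in $(H',\delta_{H'})$ using the hypothesis on $(\|e_k'\|)$, and then pass to the Riesz representatives $(g_k)$ in $H$. The only difference is packaging: where you spell out the first step by hand (your estimate $|e_k'(f)-a_k|<2^{-m}$) and the last step via the Fourier representation and $\|g_k\|=\|e_k'\|$, the paper simply cites Proposition~3.3 of \cite{5} for the first step and the computable Fr{\'e}chet--Riesz Theorem of \cite{3} for the last, arriving at the same conclusion.
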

\begin{proof}
 Since $(e_{k})$ is a computable basis for $H$, by Proposition 3.3 \cite{5}, we obtain the computability of the sequence of coordinate functionals $(e_{k}^{'})$ with respect to $[\delta_{H}\rightarrow \delta_{\mathbb{F}}]$ representation, where $e_{k}^{'}: H \rightarrow \mathbb{F}$ is given by
 \begin{align*}  e_{k}^{'}(\sum_{i=1}^{\infty}x_{i}e_{i})= x_{k},\ \ k\in \NN. \end{align*}
Since $(\|e_{k}^{'}\|)$ is computable, we get that $(e_{k}^{'})$ is a computable sequence with respect to $\delta_{H^{'}}$ representation. By the computable Fr{\'e}chet Riesz Theorem \cite{3}, there exists a computable sequence $(g_{k})$ in $H$ such that $e_{k}^{'}(f)= \langle f, g_{k} \rangle$ for all $f\in H$. 
Thus, we obtain
\begin{align*} f= \sum_{k=1}^{\infty}\langle f,g_{k}\rangle e_{k}, \ \ \textrm{for all} \ \ f\in H, \end{align*}
where $(g_{k})$ is a unique computable basis for $H$.\end{proof}

\begin{corollary}
Let $H$ be a computable Hilbert space and $(e_{k})$ be a computable monotone basis for $H$. Then, there exists a unique computable basis $(g_{k})$ for $H$ such that
 $f= \sum_{k=1}^{\infty}\langle f,g_{k} \rangle e_{k}, \ \  f\in H$.
\end{corollary}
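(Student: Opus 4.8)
The plan is to reduce the statement to the preceding theorem by checking that, for a monotone basis in a Hilbert space, the sequence of norms of the coordinate functionals is computable; in fact it will turn out to equal $(1/\|e_k\|)$.

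First I would recall that the partial sum operators $P_n\colon H\to H$ of a Schauder basis $(e_k)$, given by $P_n\bigl(\sum_i x_i e_i\bigr)=\sum_{i=1}^n x_i e_i$, are idempotent, and that $(e_k)$ being monotone means exactly that $\|P_n x\|\le\|P_m x\|$ for $n\le m$; letting $m\to\infty$ yields $\|P_n\|\le 1$ for all $n$. I would then invoke the classical fact that an idempotent of norm at most $1$ on a Hilbert space is self-adjoint, hence an orthogonal projection. Thus each $P_n$ is the orthogonal projection of $H$ onto $E_n:=\operatorname{span}\{e_1,\dots,e_n\}$, so that $P_n-P_{n-1}$ is the orthogonal projection onto $E_n\ominus E_{n-1}$; since $(P_n-P_{n-1})e_n=e_n$ and $\dim(E_n\ominus E_{n-1})=1$, this subspace is $\operatorname{span}\{e_n\}$ and $(P_n-P_{n-1})x=\langle x,e_n\rangle e_n/\|e_n\|^2$. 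Comparing with the identity $(P_n-P_{n-1})x=e_n'(x)\,e_n$, which defines the $n$-th coordinate functional $e_n'$, I get $e_n'(x)=\langle x,e_n\rangle/\|e_n\|^2$, whence $\|e_n'\|=1/\|e_n\|$ (and, incidentally, the $e_n$ are pairwise orthogonal).

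Once this is in place the computability part is immediate: since $(e_k)$ is a computable sequence in $H$ and the norm $\|\cdot\|\colon H\to\RR$ is computable, $(\|e_k\|)$ is a computable sequence of strictly positive reals, and applying the reciprocal map $x\mapsto 1/x$ — which is computable on $\RR\setminus\{0\}$ — termwise shows that $(\|e_k'\|)=(1/\|e_k\|)$ is a computable sequence of reals. Hence $(e_k)$ is a computable basis whose coordinate-functional norms form a computable sequence, and the preceding theorem produces the required unique computable basis $(g_k)$ with $f=\sum_{k=1}^\infty\langle f,g_k\rangle e_k$ for all $f\in H$.

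The main obstacle is the orthogonality observation: in a general Banach space monotonicity only gives $1/\|e_k\|\le\|e_k'\|\le 2/\|e_k\|$, which is not enough to make $(\|e_k'\|)$ computable, so it is essential to exploit the inner product to pin the norm down to the exact value $1/\|e_k\|$. Everything after that step is routine.
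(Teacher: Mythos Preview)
Your argument is correct. The key step---that a norm-$1$ idempotent on a Hilbert space is self-adjoint, so a monotone Schauder basis in $H$ is automatically orthogonal and hence $\|e_k'\|=1/\|e_k\|$---is valid, and the computability of $(\|e_k'\|)$ then follows immediately from the computability of $(e_k)$, the norm, and the reciprocal map. The reduction to the preceding theorem is exactly the intended one.

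The paper, however, does not carry out this Hilbert-space-specific computation. It simply invokes Proposition~4.6 of Brattka--Dillhage \cite{5}, a general result for computable Banach spaces which yields directly that the coordinate functionals of a computable monotone basis are computable with respect to the dual-space representation $\delta_{H'}$ (and in particular that their norms form a computable sequence). Your route is more elementary and fully self-contained, and it exposes the structural fact that monotone bases in Hilbert spaces are orthogonal---so that this corollary is really the same statement as the orthonormal-basis corollary that follows it, up to normalisation. The paper's route, by contrast, treats the Banach and Hilbert settings uniformly at the cost of importing an external lemma whose proof is not reproduced here; your own remark about the $1/\|e_k\|\le\|e_k'\|\le 2/\|e_k\|$ sandwich shows why that lemma is not entirely trivial in the general Banach case.
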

\begin{proof}
 The proof follows in view of Proposition 4.6 \cite{5}.
\end{proof}

\begin{corollary}
Let $H$ be a computable Hilbert space and $(e_{k})$ be a computable orthonormal basis for $H$. Then, there exists a unique computable basis $(g_{k})$ for $H$ such that
$f= \sum_{k=1}^{\infty}\langle f,g_{k}\rangle e_{k},\ \  f\in H$.\\
\end{corollary}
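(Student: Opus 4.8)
The plan is to obtain this corollary as an immediate specialization of Theorem 3.24. First I would compute the coordinate functionals of the orthonormal basis $(e_{k})$: writing $f=\sum_{i=1}^{\infty}\langle f,e_{i}\rangle e_{i}$, one has $e_{k}'(f)=\langle f,e_{k}\rangle$, so $e_{k}'$ is simply the functional $\langle\,\cdot\,,e_{k}\rangle$. By Cauchy--Schwarz, $|e_{k}'(f)|\le\|f\|$ for all $f\in H$, while $e_{k}'(e_{k})=1$ and $\|e_{k}\|=1$; hence $\|e_{k}'\|=1$ for every $k\in\NN$. Therefore the sequence of norms $(\|e_{k}'\|)_{k\in\NN}$ is the constant sequence with value $1$, which is $\delta_{\RR}$-computable (a constant sequence whose value is a computable real is a computable sequence). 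Thus the hypothesis of Theorem 3.24 is met, and that theorem delivers a unique computable basis $(g_{k})$ for $H$ with $f=\sum_{k=1}^{\infty}\langle f,g_{k}\rangle e_{k}$ for all $f\in H$, which is exactly the assertion of the corollary.

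An alternative route, matching the placement of this corollary right after Corollary 3.25, is to observe that every orthonormal basis is a monotone basis: the partial-sum operators $P_{n}\bigl(\sum_{i}x_{i}e_{i}\bigr)=\sum_{i=1}^{n}x_{i}e_{i}$ are orthogonal projections, so $\|P_{n}\|\le1$ and the basis is monotone with constant $1$. Corollary 3.25 then applies verbatim and yields the same conclusion.

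I do not expect a genuine obstacle here; the only points that need to be recorded are the elementary identity $\|e_{k}'\|=1$ and the trivial observation that a constant sequence of a computable real is a computable sequence, so that the computability hypothesis of Theorem 3.24 (respectively the monotonicity hypothesis of Corollary 3.25) holds automatically. As a consistency check one may also note that the resulting dual basis is $(g_{k})=(e_{k})$ — an orthonormal basis is self-dual — and the uniqueness clause of Theorem 3.24 forces precisely this.
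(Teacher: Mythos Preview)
Your proposal is correct. The paper actually states this corollary without proof, relying on its placement immediately after Corollary~3.25; your second route (an orthonormal basis is monotone, so Corollary~3.25 applies) is exactly the intended one-line justification, and your first route via Theorem~3.24 with $\|e_{k}'\|=1$ is an equally valid direct verification.
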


\par Now, if $(f_{k})= (Ue_{k})$ is a computable frame for a computable Hilbert space $H$, with $U^{*}$ computable, then Corollary 3.10 shows that $S^{-1}$ is a computable operator and so $(S^{-1}f_{k})$ is a computable frame such that
$f= \sum_{k=1}^{\infty}\langle f,S^{-1}f_{k}\rangle f_{k}$ for all $f\in H$.
That is, $(S^{-1}f_{k})$ is a computable dual frame for $(f_{k})$. But such a dual frame is not unique as shown in Lemma 5.6.1 \cite{6}.\\

In the following example, we show that the dual of a computable frame need not be computable.

\begin{example}
Let $(a_{i}) \in l^{2}$ be a sequence of positive real numbers that is computable as a sequence in $\RR$ such that $\|(a_{i})\|_{l^{2}}$ is not computable. We assume $a_{0}= 1$, $\|(a_{i})\|_{l^{2}}^{2} < 2$ and equip $l^{2}$ over $\RR$ with its standard basis $(\delta_{i})$. Define $(g_{i})\in l^{2}$ by
\begin{align*} g_{i}= (0,0,....0,1,a_{1},a_{2}....), \end{align*}
for all $i\in \NN$. That is, $g_{i}= U\delta_{i}$, where $U$ is given by

\[\left(\begin{array}{ccccc}
 1& 0 & 0 & 0 & \cdots\\
 a_{1}& 1 & 0 & 0 & \cdots\\
 a_{2} & a_{1} & 1 & 0 & \cdots\\
 \vdots &\vdots &\vdots & \ddots\end{array}\right).\]

Then, $(g_{i})$ is a non computable frame. Also $x= (x_{0}, x_{1},...)\in l^{2}$ can be expressed as
\begin{align*} x &= \sum_{i=0}^{\infty}(x_{i}-\sum_{j=0}^{i-1}a_{i-j}x_{j})g_{i} \\
                  &= \sum_{i=0}^{\infty}\langle x,(-a_{i},-a_{i-1},...-a_{1},1,0...)\rangle g_{i}.\end{align*}
Define $f_{i}= (-a_{i},-a_{i-1},...-a_{1},1,0...)$, $i\in \NN $. Then, $(f_{i})\in l^{2}$ is a computable dual frame for $(g_{i})$. \end{example}

\begin{remark}
The condition of computability of the sequence $(\|e_{k}^{'}\|)$ in Theorem 3.24, cannot be relaxed. Indeed, in Example 3.27, the sequence $(f_{i})$ is a computable basis for $l^{2}$, $(g_{i})$ is a non computable basis for $l^{2}$  and the coordinate functional $f_{0}^{'}$ has norm $\|f_{0}^{'}\|$= $\|(a_{i})\|_{l^{2}}$ which is not computable.
\end{remark}

Finally, we give a computable version of a necessary and sufficient condition for the dual frame to be computable.

\begin{theorem}
Let $H$ be a computable Hilbert space and $(f_{k})$ be a computable frame for $H$. Then, a sequence $(g_{k})$ in $H$ is a computable dual frame of $(f_{k})$ if and only if $(g_{k})= (V\delta_{k})$, where $V: l^{2}\rightarrow H$ is a computable left inverse of $T^{*}$ and $(\delta_{k})$ is the standard computable orthonormal basis of $l^{2}(\NN)$.
\end{theorem}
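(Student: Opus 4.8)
The plan is to reproduce the classical characterisation of dual frames via bounded left inverses of the analysis operator (see \cite{6}) and to let Theorem~3.5 supply the effectivity. Write $T^{*}:H\to l^{2}$, $T^{*}f=(\langle f,f_{k}\rangle)_{k}$, for the analysis operator of the fixed computable frame $(f_{k})$, fix an upper frame bound $B$ for $(f_{k})$, and read ``computable left inverse of $T^{*}$'' as: a computable operator $V:l^{2}\to H$ with $VT^{*}=I_{H}$.

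For the implication ``$(g_{k})=(V\delta_{k})$ for a computable left inverse $V$ $\Rightarrow$ $(g_{k})$ is a computable dual frame'', I would first observe that $(g_{k})=(V\delta_{k})$ is a computable sequence in $H$, since $(\delta_{k})$ is a computable sequence in $l^{2}$ and $V$ is computable. Next I would recover, in the usual non-effective way, that $(g_{k})$ is a frame: the identity $\sum_{k}|\langle f,g_{k}\rangle|^{2}=\|V^{*}f\|^{2}\le\|V\|^{2}\|f\|^{2}$ gives the Bessel bound, and combining $VT^{*}=I_{H}$ with Cauchy--Schwarz and the estimate $\sum_{k}|\langle f,f_{k}\rangle|^{2}\le B\|f\|^{2}$ gives the lower bound $\tfrac1B\|f\|^{2}\le\sum_{k}|\langle f,g_{k}\rangle|^{2}$. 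The duality identity then follows from continuity of $V$: since $T^{*}f=\sum_{k}\langle f,f_{k}\rangle\delta_{k}$ in $l^{2}$, we get $f=VT^{*}f=\sum_{k}\langle f,f_{k}\rangle V\delta_{k}=\sum_{k}\langle f,f_{k}\rangle g_{k}$, which is the equivalent form of $f=\sum_{k}\langle f,g_{k}\rangle f_{k}$. Hence $(g_{k})$ is a computable dual frame of $(f_{k})$.

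For the converse, given a computable dual frame $(g_{k})$, I would take $V:l^{2}\to H$ to be its synthesis operator, $V((c_{k}))=\sum_{k}c_{k}g_{k}$. Because $(g_{k})$ is a computable frame, Theorem~3.5 immediately yields that $V$ is a computable operator from $l^{2}$ onto $H$, and $V\delta_{k}=g_{k}$ by definition. It remains to check $VT^{*}=I_{H}$: for every $f\in H$ one has $VT^{*}f=\sum_{k}\langle f,f_{k}\rangle g_{k}=f$ by the (equivalent form of the) dual frame identity. Thus $V$ is a computable left inverse of $T^{*}$ with $(g_{k})=(V\delta_{k})$, which closes the argument.

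Neither direction hides a deep computability obstacle: the forward direction is just a composition of computable maps together with the classical facts that a bounded operator transports an $l^{2}$-convergent series to the corresponding series in $H$ and that $(V\delta_{k})$ is automatically a frame; the converse reduces, via Theorem~3.5, to recognising that the defining series of a dual frame \emph{is} its synthesis operator evaluated on the standard basis. The one point I would treat carefully is the bookkeeping around these two identifications, and the observation that, unlike Theorems~3.9 and~3.19, no computability hypothesis on $T^{*}$ is needed here, so the characterisation is unconditional.
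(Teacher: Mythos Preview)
Your proof is correct and follows essentially the same route as the paper: in one direction take $V$ to be the synthesis operator of $(g_{k})$ and invoke Theorem~3.5 for its computability, and in the other direction verify that $(V\delta_{k})$ is a computable dual frame. The only cosmetic difference is that the paper deduces the frame property of $(V\delta_{k})$ directly from the surjectivity of $V$ (which is immediate from $VT^{*}=I_{H}$), whereas you compute explicit frame bounds via $V^{*}$ and Cauchy--Schwarz; both arguments are standard and neither requires computability of $T^{*}$.
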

\begin{proof}
Let $(g_{k})\subseteq H$ be a computable dual frame of $(f_{k})$. Then $f= \sum_{k=1}^{\infty}\langle f,f_{k}\rangle g_{k}$ for all $f\in H$.
Let $V$ be the synthesis operator of $(g_{k})$. That is, $V: l^{2}\rightarrow H$ is given by
$V((c_{k}))= \sum_{k=1}^{\infty}c_{k}g_{k}$, for all $(c_{k})\in l^{2}$.
Then by Theorem 3.5, $V$ is computable. Also, the operator $V$ satisfies
$V(\delta_{k})= g_{k}$ for all  $k\in \NN$
 and
 $VT^{*}(f)= \sum_{k=1}^{\infty}\langle f,f_{k} \rangle g_{k}= f$ for all $f\in H$.
Hence $V$ is a computable left inverse of $T^{*}$.\\
Conversely, let $(g_{k})= (V\delta_{k})$, $k\in \NN$, where $V$ is a computable left inverse of $T^{*}$. Since $V(T^{*}f)=f$ for all $f\in H$, $V$ is surjective and so $(g_{k})$ is a frame for $H$. Also, we have
$f=  \sum_{k=1}^{\infty}\langle f,f_{k} \rangle g_{k}$ for all $f\in H$.
This shows that $(g_{k})$ is a dual frame to $(f_{k})$. Finally, the computability of $(g_{k})$ follows from the computability of $V$ and of the sequence $(\delta_{k})$.\end{proof}

\par The following is a result on characterization of computable dual frames. Here, a computable Bessel sequence is a Bessel sequence that is computable as a sequence in $H$.
\begin{theorem}
Let $(f_k)$ be a computable frame for a computable Hilbert space $H$, with computable analysis operator. The computable dual frames of $\{f_k\}$ are precisely the families
\begin{align*}
\{g_k\}=\bigg\{S^{-1} f_k+h_k-\sum_{j=1}^{\infty} \langle S^{-1} f_k, f_j \rangle h_j \bigg\}_{k=1}^{\infty},
\end{align*}
where $\{h_k\}$ is a computable Bessel sequence in $H$.
\end{theorem}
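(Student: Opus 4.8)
The plan is to split the asserted equality of families into its two inclusions, and in each case to isolate the classical frame–theoretic content from the computability bookkeeping. The recurring computability fact I would establish first is this: since $(f_{k})$ is a computable frame, its synthesis operator $T:l^{2}\to H$ is computable (Theorem 3.5), hence $S=TT^{*}$ is computable whenever $T^{*}$ is, and therefore $S^{-1}$ is computable by the computable Banach Inverse Mapping Theorem \cite{4} --- exactly the argument used in the proof of Theorem 3.22. Consequently $(S^{-1}f_{k})_{k}$ is a computable sequence in $H$, and $k\mapsto T^{*}(S^{-1}f_{k})=(\langle S^{-1}f_{k},f_{j}\rangle)_{j}$ is a computable sequence in $l^{2}$, being a computable operator applied to a computable sequence. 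I would also record that the synthesis operator $U_{h}:l^{2}\to H$, $(c_{j})\mapsto\sum_{j}c_{j}h_{j}$, of a \emph{computable Bessel} sequence $(h_{k})$ is a computable operator: it is bounded by the Bessel inequality and $U_{h}(\delta_{k})=h_{k}$ is a computable sequence, so the easy half of the argument in Theorem 3.5 applies verbatim (surjectivity played no role there).

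For the inclusion ``$\supseteq$'', fix a computable Bessel sequence $(h_{k})$ and put $g_{k}=S^{-1}f_{k}+h_{k}-\sum_{j=1}^{\infty}\langle S^{-1}f_{k},f_{j}\rangle h_{j}$. By the classical parametrization of dual frames in \cite{6}, $(g_{k})$ is a dual frame of $(f_{k})$; in particular it is a frame with $f=\sum_{k}\langle f,g_{k}\rangle f_{k}$ for all $f\in H$. It remains to check that $(g_{k})$ is computable as a sequence. The third term equals $U_{h}\bigl(T^{*}(S^{-1}f_{k})\bigr)$, a computable sequence in $H$ by the paragraph above; the first and second terms are computable sequences; and the vector operations of $H$ are computable. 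Hence $(g_{k})$ is a computable sequence, so it is a computable dual frame.

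For the inclusion ``$\subseteq$'', let $(g_{k})$ be a computable dual frame of $(f_{k})$ and set $h_{k}:=g_{k}-S^{-1}f_{k}$. Then $(h_{k})$ is computable, being the difference of the computable sequences $(g_{k})$ and $(S^{-1}f_{k})$. It is Bessel: both $(g_{k})$ and the canonical dual $(S^{-1}f_{k})$ are frames, hence Bessel, and $\|(\langle f,h_{k}\rangle)_{k}\|_{l^{2}}\le\|(\langle f,g_{k}\rangle)_{k}\|_{l^{2}}+\|(\langle f,S^{-1}f_{k}\rangle)_{k}\|_{l^{2}}$ supplies a Bessel bound. Thus $(h_{k})$ is a computable Bessel sequence. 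Finally, applying the reconstruction identity $x=\sum_{j}\langle x,f_{j}\rangle g_{j}$ (valid since $(g_{k})$ is a dual frame) and the canonical decomposition $x=\sum_{j}\langle x,f_{j}\rangle S^{-1}f_{j}$ to the vector $x=S^{-1}f_{k}$ yields $\sum_{j}\langle S^{-1}f_{k},f_{j}\rangle h_{j}=S^{-1}f_{k}-S^{-1}f_{k}=0$, so that $S^{-1}f_{k}+h_{k}-\sum_{j}\langle S^{-1}f_{k},f_{j}\rangle h_{j}=S^{-1}f_{k}+h_{k}=g_{k}$. Hence $(g_{k})$ has the stated form for a computable Bessel sequence, which finishes the proof.

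The step I expect to be the main obstacle is the uniform-in-$k$ computability of the correction series $\sum_{j}\langle S^{-1}f_{k},f_{j}\rangle h_{j}$ appearing in the ``$\supseteq$'' inclusion: this is precisely where the two hypotheses (computability of $T^{*}$ and of the Bessel sequence $(h_{k})$) are genuinely used, through the computability of $S^{-1}$, of the composite $T^{*}\circ S^{-1}$, and of the Bessel synthesis operator $U_{h}$, combined with evaluation and type conversion to pass from ``computable operator applied to a computable sequence'' to ``computable sequence''. The purely analytic ingredients --- that every such family is a dual frame, and that the correction series vanishes when $h_{k}=g_{k}-S^{-1}f_{k}$ --- are classical and can be quoted from \cite{6}.
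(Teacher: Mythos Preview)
Your proof is correct but takes a different route from the paper's. The paper invokes Theorem~3.29 to identify computable dual frames with sequences $(V\delta_{k})$ for $V$ a computable left inverse of $T^{*}$, then asserts (without detailed verification) that the computable left inverses of $T^{*}$ are exactly the operators $V=S^{-1}T+W(I-T^{*}S^{-1}T)$ with $W:l^{2}\to H$ computable --- equivalently, $W$ the synthesis operator of a computable Bessel sequence $(h_{k})$ --- and evaluating at $\delta_{k}$ yields the formula. You bypass Theorem~3.29 entirely and work at the sequence level: for ``$\supseteq$'' you cite the classical parametrization and check term-by-term computability, and for ``$\subseteq$'' you set $h_{k}:=g_{k}-S^{-1}f_{k}$ and observe that the correction series $\sum_{j}\langle S^{-1}f_{k},f_{j}\rangle h_{j}$ collapses to zero, so the representation holds trivially. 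The paper's argument is shorter and more structural, reusing the left-inverse machinery already in place; yours is more self-contained and makes explicit which computability ingredients are used where --- in particular, your observation that the forward direction of Theorem~3.5 does not need surjectivity, hence gives computability of the synthesis operator of any computable Bessel sequence, is exactly the point the paper leaves implicit when it equates ``computable $W$'' with ``computable Bessel sequence''.
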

\begin{proof}
By Theorem 3.29, the computable dual frames of $(f_k)$ are precisely the families $(V \delta_k)$, where $V:l^2\rightarrow H$ is a computable left inverse of $T^*$. It can be easily observed that the computable left inverses of $T^*$ are operators of the form $S^{-1}T+W(I-T^*S^{-1}T)$, where $W:l^2\rightarrow H$ is a computable operator, that is, $W:l^2\rightarrow H, W((c_j))=\sum_{j=1}^{\infty}c_j h_j,$ where$\{h_k\}$ is a computable Bessel sequence in $H$. Using the fact that $T(\delta_k)=f_k$, we obtain the desired result.
\end{proof}

\mbox{}
\end{document}